\newcommand{\auskommentieren}[1]{}
\newcommand{\beq}{\begin{equation}}
\newcommand{\eeq}{\end{equation}}
\DeclareMathOperator{\graph}{graph}
\DeclareMathOperator{\diam}{diam}
\DeclareMathOperator{\dist}{dist\ }
\newtheorem{remark}[theorem]{Remark}
\title{$L^{\infty}$-error estimate for the finite element method on two dimensional surfaces} 
\author{Heiko Kr\"oner\thanks{Bereich Optimierung und Approximation, Fachbereich Mathematik, Bundesstrasse 55, 20146 Hamburg, Germany,
{\tt Heiko.Kroener@uni-hamburg.de}}}
\begin{document}
\maketitle
\slugger{mms}{xxxx}{xx}{x}{x--x}

\begin{abstract}
We approximate the solution of the equation
\begin{equation}
-\Delta_S u+u = f
\end{equation}
on a two-dimensional, embedded, orientable, closed surface $S$ 
where $-\Delta_S$ denotes the Laplace Beltrami operator on $S$ 
by using continuous, piecewise linear finite elements on a triangulation of $S$
with flat triangles.
We show that the $L^{\infty}$-error is of order $O(h^2|\log h|)$ as in the corresponding
situation in an Euclidean setting.
\end{abstract}

\begin{keywords}
linear elliptic equation; two-dimensional surface; finite elements
\end{keywords}
\section{Introduction}
During the last years 
several articles appeared which deal with the numerical solution of linear partial
differential equations which are defined on a hypersurface in $\mathbb{R}^3$. 
Roughly spoken their common aim is to show that concepts and properties which are well-known
in an Euclidean setting carry over to the surface case.
Without claiming completeness we summarize some steps towards this goal.

In \cite{Dziuk1988} the finite element approximation of the Laplace-Beltrami equation
\begin{equation}
-\Delta_S u = f
\end{equation}
on a surface $S$ with continuous, piecewise linear elements  (on a polyhedral approximation of $S$) is presented and it is shown that the $L^2$- and $H^1$- error estimates known from the corresponding Euclidean setting carry over to this case. In \cite{DziukElliott2007} this idea is extended to a semi-discrete approximation of linear parabolic equations which are defined on a (moving) hypersurface for which the motion is a priori given by a smooth one parameter family of diffeomorphisms of a fixed initial surface. Here, one has to take care of the fact that the time derivative is defined suitably which is tackled by the concept of the material derivative and the
spatial discretization uses a moving mesh (the method is called ESFEM). Furthermore, $L^{\infty}(L^2)$- and $L^2(H^1)$-error estimates 
are shown and in \cite{DziukElliott2013} the $L^{\infty}(L^2)$-estimate is improved to the optimal order of  $O(h^2)$.
We refer to \cite{DziukElliott2013Acta} for an survey of finite element methods for surface
PDEs.

We mention further contributions to this topic in the literature. 
In \cite{DziukLubichMansour2012} Runge-Kutta methods
known from the semi-linear Euclidean setting, cf. \cite{LubichOstermann1993, LubichOstermann1995a, 
LubichOstermann1995b}, are adapted to 
ESFEM to obtain a fully discrete approximation of the linear parabolic equation in 
combination with the moving surface. See also \cite{LubichMansourVenkataraman2013} for  a backwards 
difference time discretization of this problem.
In \cite{ElliottStyles2012} an additional tangential motion of the grid for ESFEM is introduced to 
improve the mesh quality, or more precisely, to compensate a motion related
possibly
deterioration of the mesh. 
In \cite{OlshanskiiReuskenGrande2009} finite element spaces that are induced by 
triangulations of an 'outer' domain are used to discretize partial differential equations 
on a surface, see also \cite{OlshanskiiReuskenGrande2010}.  
In \cite{OlshanskiiReuskenXu2014, OlshanskiiReusken2014} an Eulerian finite element method for solving linear 
parabolic partial differential equations is presented and  
a stabilized finite element method for linear parabolic equations on surfaces is studied.
In \cite{DziukElliottHeine2010} a $h$-narrow band finite element method for linear elliptic equations on implicit surfaces is studied. See also \cite{BertalmioChengOsherSapiro2001} for variational problems and partial differential equations on implicit surfaces.
In \cite{DednerMadhavanStinner2013} an analysis of the discontinuous Galerkin method for linear elliptic
problems on surfaces is carried out. 

In our paper we show that the well-known $L^{\infty}$-estimate for the finite element 
approximation of linear elliptic equations in a  two dimensional Euclidean setting, 
cf. \cite{Scott1976} and compare also \cite{Natterer1975, Nitsche1975, Haverkamp1984}, 
carries over to the case of a linear elliptic equation on a surface in $\mathbb{R}^3$
which seems to be omitted in the literature until now according to the author's knowledge.

We refer to \cite{BaumgardnerFrederickson1985} where an icosahedral discretization of the two-sphere is used to solve the Laplace-Beltrami equation on the two-sphere. 
There it is claimed (without detailed justification) that the quadratic order of the 
$L^{\infty}$-interpolation error immediately carries over to an 
$L^{\infty}$-estimate of quadratic order 
for the discretization error, see the passage following Table 2 on page 1114 in \cite{BaumgardnerFrederickson1985}, which is wrong, of course. 

Our paper is organized as follows. In the remaining part of the introduction
we present the general setting and formulate our partial differential equation. In Section \ref{elementary} we introduce our notation, state some basic facts which will be used in the sequel several times and present the discretization of the equation. In
Section \ref{17} we present for completeness 
in the surface case 
 the known $H^1$- 
and $L^2$-error estimates, cf. \cite{Dziuk1988}. 
In Section \ref{32} we state in Theorem \ref{11} our main result about the $L^{\infty}$-error estimate in the surface case and present a proof.

We sketch the idea of the proof.
We prove the estimate
pointwise by using an approximative Green's function $\tilde v$ on the surface.
The latter function is obtained by lifting a cutted-off Euclidean Green's function
from the tangent plane to the surface at which the appearing discrepancy to an exact Green's 
function on the surface is -- in case of relevance -- supressed by the $L^2$-error estimates 
from Section \ref{17}. 
We define a finite element approximation of $\tilde v$ for which we prove an error 
estimate in the $W^{1,1}$-norm which has the same order as in the corresponding
Euclidean case. In doing so we adapt the argumentation from \cite{Scott1976}.

Let $S$ be a smooth two-dimensional, embedded, orientable, closed hypersurface in $\mathbb{R}^3$. We triangulate the surface by a family $T_h$ of flat triangles with corners (i.e. nodes) lying on $S$. We denote the surface of class $C^{0,1}$ given by the union of the triangles $\tau \in T_h$ by $S_h$; the union of the corresponding nodes is denoted by $N_h$. Here, $h>0$ denotes a discretization parameter which is related to the triangulation in the following way.
For $\tau \in T$ we define the diameter $\rho(\tau)$  of the smallest disc containing $\tau$, the diameter
 $\sigma(\tau)$ of the largest disc contained in $\tau$ and
\begin{equation}
h = \max_{\tau \in T_h}\rho(\tau), \quad \gamma_h = \min_{\tau \in T_h}\frac{\sigma(\tau)}{h}.
\end{equation}
We assume that the family $(T_h)_{h>0}$ is quasi-uniform, i.e. $\gamma_h \ge \gamma_0 >0$.
We let 
\begin{equation}
V_h = \{v\in C^0(S_h): v_{|\tau}\ \text{linear for all}\ \tau \in T_h \}
\end{equation}
be the space of continuous piecewise linear finite elements.

We assume $f\in L^2(S)$  and our goal is to prove error estimates for a finite element approximation of the unique solution $u\in H^2(S)$ of the PDE 
\begin{equation} \label{62}
-\Delta_S u + u = f
\end{equation}
where $\Delta_S$ is the Laplace-Beltrami operator on $S$. In Section \ref{32} we will assume that $f$ is in addition so that $ u \in W^{2,\infty}(S)$.

\begin{remark}
After submitting a first version of the present article to arXiv the author became aware of the fact that Theorem \ref{11} has been proved in \cite{demlow}.
\end{remark}

\section{Notations, elementary observations and discrete formulation} \label{elementary}
Let $N$ be a tubular neighborhood of $S$ in which 
the Euclidean metric of $N$ can be written in the coordinates $(x^0, x)=(x^0, x^i)$ of the tubular neighborhood as
\begin{equation}
\bar g_{\alpha \beta} = (dx^0)^2 + \sigma_{ij}(x)dx^idx^j.
\end{equation} 
Here, $x^0$ denotes the globally (in $N$) defined signed distance to $S$ and 
$x=(x^i)_{i=1,2}$ local coordinates for $S$.

For small $h$ we can write $S_h$ as graph (with respect to the coordinates
of the tubular neighborhood) over $S$, i.e.
\begin{equation} \label{30}
S_h = \graph \psi = \{(x^0, x): x^0 = \psi(x), x \in S\}
\end{equation}
where $\psi=\psi_h \in C^{0,1}(S)$ suitable. Note, that 
\begin{equation} \label{31}
|D\psi|_{\sigma}\le c h, \quad |\psi|\le ch^2.
\end{equation}
The induced metric of $S_h$ is given by
\begin{equation}
g_{ij}(\psi(x), x) = \frac{\partial \psi}{\partial x^i}(x) \frac{\partial \psi}{\partial x^j}(x) + \sigma_{ij}(x).
\end{equation}
Hence we have for the metrics, their inverses and their determinants
\begin{equation}
g_{ij}=\sigma_{ij}+O(h^2), \quad 
g^{ij} = \sigma^{ij}+O(h^2) \quad \text{and} \quad
g = \sigma + O(h^2)|\sigma_{ij}\sigma^{ij}|^{\frac{1}{2}}
\end{equation}
where we use summation convention.

Let $f \in W^{1,p}(S)$, $g \in W^{1,p^{*}}(S)$, $1\le p \le \infty$ and $p^{*}$
H\"older conjugate of $p$. We define the so-called lift 
$\tilde f$ of $f$ to $S_h$ by $f(x) = \tilde f(\psi(x), x)$, $x\in S$, and correspondingly 
for $g$ (more generally, we can do this procedure whenever we have two graphs in the same coordinate system and denote it by the terminus lift, furthermore, this terminus can be obviously extended to subsets). 
In local coordinates $x=(x^i)$ of $S$ hold
\begin{equation} \label{4}
\int_S \left<D f, D g\right> = \int_S \frac{\partial f}{\partial x^i}\frac{\partial g}{\partial x^j}\sigma^{ij}(x)\sqrt{\sigma(x)}dx^idx^j,
\end{equation}
\begin{equation} \label{5}
\int_{S_h} \left<D \tilde f, D \tilde  g\right> = \int_{S} \frac{\partial f}
{\partial x^i}\frac{\partial g}{\partial x^j}g^{ij}(\psi(x), x)\sqrt{g(\psi(x), x)}
dx^idx^j,
\end{equation}
\begin{equation}  \label{101}
\int_S \left<D f, D g\right> = \int_{S_h} \left<D \tilde f, D \tilde  g\right> + 
O(h^2)\|f\|_{W^{1,p}(S)}
\|g\|_{W^{1,p^{*}}(S)},
\end{equation}
and similarly, 
\begin{equation} \label{100}
\int_S f =   \int_{S_h}\tilde f+ O(h^2)\|f\|_{L^1(S)}
\end{equation}
where now $f\in L^1(S)$ is sufficient.

The bracket $\left<u,v\right>$ denotes here the scalar product of two tangent vectors $u,v$ (or their covariant counterparts), and later also the application of a distribution $u$ to a test function $v$; which meaning is on hand will be clear from context. $\|\cdot \|_{W^{k,p}}$ denotes the usual Sobolev norm, $|\cdot |_{W^{k,p}}=\sum_{|\alpha|=k}\|D^{\alpha}\cdot \|_{L^p}$ and $H^k=W^{k,2}$.

\begin{remark}\label{70}  
Let $z_0\in S$ and $T_{z_0}S$ be the tangent plane of $S$ in $z_0$. A ball 
$B_{2r_1}(z_0)\subset S$, $r_1>0$ suitable, and a corresponding portion of $S_h$ can be written
as a graph over a corresponding subset 
$U$ of $T_{z_0}S$ in 
a (perpendicular) Euclidean coordinate system $(x^0, x^1, x^2)$. 
Here, $x^1, x^2$
denote Euclidean coordinates in $T_{z_0}S$ with center in $z_0$ and $x^0$ denotes the coordinate
axis perpendicular to $T_{z_0}S$ so that $T_{z_0}S = \{x^0=0\}$.
One can consider lifts of functions between these three (pieces of) surfaces with
 respect to this Euclidean representation as graph.
Analogous estimates to (\ref{100}), (\ref{101}) hold except for lifts from or to $U$, for these $O(h^2)$ has to be replaced by $O(\max(\diam (\supp f), \diam (\supp g))^2)$. 
 Since $S$ is compact all constants in the estimates and $r_1$
can be chosen independently from $z_0$.
\end{remark}

Since the properties and aspects needed to prove a priori error estimates for finite element 
approximations are formulated in terms of integrals 
these observations concerning the transformation behavior of integrals  
essentially imply that the known error estimates from the Euclidean setting 
carry over to the surface case as far as convergence 
of at most quadratic order is concerned.

We define 
\begin{equation} \label{8}
a:W^{1,p}(S)\times W^{1,p^{*}}(S)\rightarrow \mathbb{R}, \quad a(u,v) =\int_S \left<Du, Dv\right>+uv 
\end{equation}
and
\begin{equation} \label{9}
a_h:W^{1,p}(S_h)\times W^{1,p^{*}}(S_h)\rightarrow \mathbb{R}, \quad a(u_h,v_h) =\int_{S_h} \left<Du_h, Dv_h\right>
+u_h v_h. 
\end{equation}
The finite element approximation of $u$ in (\ref{62}) is defined as 
the unique $u_h\in V_h$ with
\begin{equation} \label{3}
a_h(u_h, \varphi_h) = \int_{S_h}f_h\varphi_h \quad \forall \varphi_h \in V_h
\end{equation}
where $f_h$ is the lift of $f$ to $S_h$. Note, that existence follows from uniqueness.

Constants that do not depend on $h$ or $z_0$ are denoted by $c$ or $c_0, c_1$ if several appear and should
be specifiable.

Functions labelled by capital letters denote (without explicit declaration) the lift via the representation as graph with respect to the tubular neighborhood $N$ of $S$ of the function
denoted with the corresponding small letter to the other surface, e.g. assume $w$ is defined on $S$ then $W$ denotes the lift of $w$ to $S_h$ and vice versa, i.e. if $w$ is defined on $S_h$ then $W$ denotes the lift of $w$ to $S$.

 \section{The $H^1$-estimate and $L^2$-estimate} \label{17}
For completeness we give in this Section a proof of the well-known estimates stated in Lemma \ref{63} and
Lemma \ref{64}, cf. \cite{Dziuk1988}.

\begin{lemma} \label{63}
We have
\begin{equation}Ê
 \|u-U_h\|_{H^1(S)} \le O(h)\|f\|_{L^2(S)}.
\end{equation}
\end{lemma}
\begin{proof}
Let $\varphi_h \in V_h$ arbitrary
then
\begin{equation} \label{1}
\begin{aligned}
\|u-U_h\|^2_{H^1(S)} = &a(u-U_h, u-U_h) \\
=& a(u-U_h, u-\Phi_h)+a(u-U_h, \Phi_h-U_h)\\
\le&  \|u-U_h\|_{H^1(S)}\|u-\Phi_h\|_{H^1(S)}
+a(u-U_h, \Phi_h-U_h).
\end{aligned}
\end{equation}
We rewrite
\begin{equation}
 \begin{aligned}
a(u-U_h, \Phi_h-U_h) =& \int_S f(\Phi_h-U_h)-\int_{S_h}f_h(\varphi_h-u_h)\\
& +O(h^2)\|U_h\|_{H^1(S)}
\|\Phi_h-U_h\|_{H^1(S)} \\
=&  O(h^2)(\|U_h\|_{H^1(S)}+\|f\|_{L^2})
\|\Phi_h-U_h\|_{H^1(S)} 
 \end{aligned}
\end{equation}
so that we obtain from (\ref{1})
\begin{equation}
\begin{aligned}
\| u-& U_h\|_{H^1(S)} 
\le 2\max(m_1, m_2)
\end{aligned}
\end{equation}
where
\begin{equation}
m_1 = \inf_{\varphi_h \in V_h} \|u- \Phi_h\|_{H^1(S)}
\end{equation}
and 
\begin{equation}
 m_2=\left\{O(h^2)\|f\|_{L^2(S)}
(\inf_{\varphi_h \in V_h} \|\Phi_h-u\|_{H^1(S)}+\|u-U_h\|_{H^1(S)})\right\}^{\frac{1}{2}}.
\end{equation}
Let $\tilde u$ be the lift of $u$ to $S_h$ then
\begin{equation}
\begin{aligned}
\inf_{\varphi_h \in V_h}\|u- \Phi_h\|_{H^1(S)} \le& \|u-\tilde u\|_{H^1(S)}
+\inf_{\varphi_h \in V_h}\|\tilde u- \varphi_h\|_{H^1(S_h)}+O(h^2)\|f\|_{L^2(S)}\\
\le& O(h)\|f\|_{L^2(S)}.
\end{aligned}
\end{equation}
Putting these estimates together yields the claim.
\end{proof}

The estimate in the $L^2$-norm can be improved.
\begin{lemma} \label{64}
We have
\begin{equation}
 \|u-U_h\|_{L^2(S)} \le O(h^2)\|f\|_{L^2(S)}.
\end{equation}
\end{lemma}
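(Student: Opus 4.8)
The plan is to prove the $L^2$-estimate by the Aubin--Nitsche duality argument, adapted to the surface setting via the transformation estimates (\ref{100}), (\ref{101}). First I would introduce the dual problem: let $w \in H^2(S)$ be the unique solution of
\begin{equation}
a(\vp, w) = \int_S (u-U_h)\vp \quad \forall \vp \in H^1(S),
\end{equation}
so that in particular $w$ solves $-\Delta_S w + w = u - U_h$ and enjoys the elliptic regularity estimate $\|w\|_{H^2(S)} \le c\|u-U_h\|_{L^2(S)}$. Taking $\vp = u - U_h$ as the test function yields
\begin{equation}
\|u-U_h\|^2_{L^2(S)} = a(u-U_h, w).
\end{equation}

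Next I would exploit Galerkin-type orthogonality to insert the finite element interpolant of $w$. The complication, relative to the flat Euclidean case, is that $u_h$ is Galerkin-orthogonal with respect to $a_h$ on $S_h$, not with respect to $a$ on $S$; the two differ by terms controlled by (\ref{101}) and (\ref{100}) at order $O(h^2)$. So I would let $\Phi_h \in V_h$ be (the lift of) a quasi-interpolant of $w$ and write
\begin{equation}
a(u-U_h, w) = a(u-U_h, w - \Phi_h) + a(u-U_h, \Phi_h),
\end{equation}
estimating the first term by Cauchy--Schwarz together with the $H^1$-estimate of Lemma \ref{63} and the interpolation bound $\|w - \Phi_h\|_{H^1(S)} \le O(h)\|w\|_{H^2(S)}$, which gives a contribution of order $O(h^2)\|f\|_{L^2(S)}\|u-U_h\|_{L^2(S)}$. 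For the second term I would pass from $a$ to $a_h$ using (\ref{101}), invoke the discrete equation (\ref{3}) to replace $a_h(u_h, \varphi_h)$ by $\int_{S_h} f_h \varphi_h$, and then pass back to $S$ using (\ref{100}); since $u$ itself satisfies the continuous equation, the leading terms cancel and what survives is again of order $O(h^2)$ times norms controlled by $\|f\|_{L^2(S)}$ and $\|w\|_{H^2(S)} \le c\|u-U_h\|_{L^2(S)}$.

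The main obstacle I anticipate is the careful bookkeeping of the geometric consistency errors: each time an integral is transported between $S$ and $S_h$ an $O(h^2)$ discrepancy is generated by (\ref{101}) and (\ref{100}), and one must check that every such term is multiplied by a factor that is at least $O(1)$ in $h$ (rather than a negative power), so that the cumulative consistency error remains genuinely of order $h^2$ rather than degrading to $h$. In particular the factor $\|\Phi_h - U_h\|_{H^1}$ appearing in these discrepancies must be bounded using both $\|u - U_h\|_{H^1(S)} = O(h)\|f\|_{L^2}$ and $\|w - \Phi_h\|_{H^1} = O(h)\|w\|_{H^2}$, so that the product carries the full $h^2$. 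Once all consistency terms are collected one arrives at $\|u-U_h\|^2_{L^2(S)} \le O(h^2)\|f\|_{L^2(S)}\|u-U_h\|_{L^2(S)}$, and dividing through gives the claim.
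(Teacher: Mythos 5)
Your proposal is correct and takes essentially the same route as the paper: an Aubin--Nitsche duality argument with the dual problem $-\Delta_S w + w = u-U_h$, the same splitting of $a(u-U_h,w)$ into a Cauchy--Schwarz term and a term controlled by the perturbed Galerkin orthogonality coming from (\ref{100})--(\ref{101}). The only minor difference is that you insert an interpolant of $w$ and invoke elliptic regularity plus interpolation estimates explicitly, whereas the paper inserts the lifted finite element solution $W_h$ of the dual problem and obtains $\|w-W_h\|_{H^1(S)} \le c h \|u-U_h\|_{L^2(S)}$ directly from Lemma \ref{63}.
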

\begin{proof}
Let $w \in H^2(S)$ be the unique solution of $-\Delta_S w+w = u-U_h$ and $w_h \in V_h$ the 
corresponding unique finite element solution to the right-hand side $\tilde u-u_h$. 
Then we have
\begin{equation}
\begin{aligned}
\int_S(u-U_h)^2 =& a(u-U_h, w) \\
=& a(u-U_h, w-W_h)+a(u-U_h, W_h) \\
\le&\|u-U_h\|_{H^1(S)}\|w-W_h\|_{H^1(S)}\\
& +O(h^2)(\|U_h\|_{H^1(S)}
+\|f\|_{L^2(S)})\|W_h\|_{H^1(S)} \\
\le& ch\|u-U_h\|_{H^1(S)}\|u-U_h\|_{L^2(S)} \\
&+ O(h^2)(\|U_h\|_{H^1(S)}+\|f\|_{L^2(S)})\|u-U_h\|_{L^2(S)}.
\end{aligned}
\end{equation}
\end{proof}

\section{The $L^{\infty}$-estimate} \label{32}

We assume that $f\in L^2(S)$ is in addition so that $ u \in W^{2,\infty}(S)$.
The following theorem states our main result.
\begin{theorem} \label{11}
There holds
\begin{equation}
\|u-U_h\|_{L^{\infty}(S)}\le c h^2|\log h|\|u\|_{W^{2,\infty}(S)}.
\end{equation}
\end{theorem}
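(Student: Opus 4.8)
The plan is to prove the estimate pointwise: fix an arbitrary $z_0\in S$ and bound $|(u-U_h)(z_0)|$ by $ch^2|\log h|\,\|u\|_{W^{2,\infty}(S)}$, the supremum over $z_0$ (with constants uniform in $z_0$, cf. Remark~\ref{70}) then giving the claim. First I would introduce a regularized Dirac mass $\delta_h$ concentrated on the triangle containing the projection of $z_0$, normalized so that $\int_S\delta_h=1$ and $\|\delta_h\|_{L^1(S)}\le c$, and, crucially, reproducing point values of discrete functions, i.e. $\int_S\Phi_h\,\delta_h=\Phi_h(z_0)$ for the lift $\Phi_h$ to $S$ of every $\varphi_h\in V_h$. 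I then let $v\in H^2(S)$ be the associated regularized Green's function, the solution of $a(\varphi,v)=\int_S\varphi\,\delta_h$ for all $\varphi\in H^1(S)$, and I let $v_h\in V_h$ (the corresponding discrete Green's function) be its finite element approximation, so that $a(\varphi_h,v_h)=\int_S\Phi_h\delta_h=\Phi_h(z_0)$ up to geometric errors. Following the sketch, $v$ is built by cutting off the explicit two-dimensional Euclidean regularized Green's function on the tangent plane $T_{z_0}S$ and lifting it to $S$ in the graph representation of Remark~\ref{70}; the discrepancy between this model and the exact regularized Green's function of $a$ is, where it matters, absorbed by the $L^2$-estimate of Lemma~\ref{64}.

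Next I would set up the duality identity. Let $I_hu$ denote (the lift to $S$ of) the piecewise linear interpolant of $u$; standard interpolation together with the transformation estimates of Section~\ref{elementary} give $\|u-I_hu\|_{L^\infty(S)}\le ch^2\|u\|_{W^{2,\infty}(S)}$ and $\|u-I_hu\|_{W^{1,\infty}(S)}\le ch\|u\|_{W^{2,\infty}(S)}$, so it suffices to estimate the nodal quantity $(I_hu-U_h)(z_0)$. Since $I_hu-U_h$ is a lift of an element of $V_h$, the reproducing property and the definition of $v_h$ give $(I_hu-U_h)(z_0)=a(I_hu-U_h,v_h)$, and the (approximate) Galerkin orthogonality $a(u-U_h,v_h)=0$ reduces this to $a(u-I_hu,v_h)=a(u-I_hu,v)+a(u-I_hu,v_h-v)$. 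Here lies the genuine novelty: Galerkin orthogonality on $S$ is only approximate, since the defining relations $a_h(u_h,\varphi_h)=\int_{S_h}f_h\varphi_h$ and $a(u,\varphi)=\int_S f\varphi$ must be transferred to a common surface by (\ref{101}) and (\ref{100}), which produces consistency terms of size $O(h^2)$ times Sobolev norms of $U_h$ and $v_h$; since $\|v_h\|$ grows at most logarithmically these remain compatible with the target order. The first term equals $\int_S(u-I_hu)\delta_h$ and is bounded by $\|u-I_hu\|_{L^\infty(S)}\|\delta_h\|_{L^1(S)}\le ch^2\|u\|_{W^{2,\infty}(S)}$, while the second is estimated by H\"older in the $W^{1,\infty}$--$W^{1,1}$ pairing,
\begin{equation}
|a(u-I_hu,v_h-v)|\le \|u-I_hu\|_{W^{1,\infty}(S)}\,\|v-v_h\|_{W^{1,1}(S)}.
\end{equation}

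The crux is therefore the $W^{1,1}$-error estimate
\begin{equation}
\|v-v_h\|_{W^{1,1}(S)}\le ch|\log h|,
\end{equation}
which has exactly the order of its Euclidean counterpart and which, combined with $\|u-I_hu\|_{W^{1,\infty}(S)}\le ch\|u\|_{W^{2,\infty}(S)}$, produces the missing factor $h^2|\log h|$. I would prove it by adapting Scott's weighted-norm argument: the logarithm enters through $\|v\|_{W^{2,1}(S)}\le c|\log h|$ (the singularity is only regularized at scale $h$), and the passage from $\|v\|_{W^{2,1}}$ to $\|v-v_h\|_{W^{1,1}}$ rests on a dyadic decomposition of $S$ into annuli centred at $z_0$ together with local energy and $L^2$ estimates on each annulus. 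On the surface I would run this decomposition in the tangent-plane coordinates of Remark~\ref{70}, so that on the inner annuli $v$ is, up to controlled geometric perturbations, the Euclidean model, while the far-field annuli and the global pollution term are controlled by Lemma~\ref{64}; throughout, (\ref{100}) and (\ref{101}) guarantee that replacing integrals over $S$ by integrals over $S_h$ costs only $O(h^2)$.

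I expect the main obstacle to be precisely this $W^{1,1}$-estimate on the surface: one must carry Scott's local/weighted finite element analysis through the nonlinear lift between $S$ and $S_h$ while keeping every geometric discrepancy at order $O(h^2)$, and verify that the summation over the $O(|\log h|)$ dyadic annuli reproduces a single logarithmic factor rather than a larger power. Establishing the requisite $W^{1,1}$-stability of the finite element projection of the regularized Green's function, and checking that the $L^2$-estimate of Lemma~\ref{64} is strong enough to suppress the discrepancy between the lifted Euclidean model and the exact Green's function uniformly in $z_0$, is where the work concentrates; the remaining assembly is routine given the lift identities of Section~\ref{elementary}.
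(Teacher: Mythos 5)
Your proposal is sound and, at the level of strategy, coincides with the paper's proof: fix $z_0$, build a Green's-type dual function over the tangent plane $T_{z_0}S$ and lift it to $S$ (Remark \ref{70}), control all geometric discrepancies by the $O(h^2)$ lift identities (\ref{101}), (\ref{100}) and the $L^2$ theory of Section \ref{17}, pair $\|u-V_h\|_{W^{1,\infty}(S)}$ with a $W^{1,1}$ error bound of order $h|\log h|$ for the finite element approximation of the dual function, and absorb the only-approximate Galerkin orthogonality into $O(h^2)$ consistency terms. Where you genuinely differ is in the two key objects. First, you regularize the Dirac mass at scale $h$ and take as dual function the smooth regularized Green's function, whereas the paper keeps the genuinely singular cut-off Euclidean Green's function $g=\tilde v$ of Definition \ref{92}; the price the paper pays for the singularity is Lemma \ref{90} (the lifted function satisfies the Green's identity only up to an $L^2$-bounded error), the Hahn--Banach argument in Lemma \ref{35} --- which, incidentally, introduces exactly your discrete delta, the linear $q$ with $\int_\tau qp=p(z_0)$ --- and the reduction of Remark \ref{44}, which uses (\ref{20}) to split off the explicit singular part so that only $\tilde l=\log r$ has to be estimated in $W^{1,1}$. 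Second, for that crucial estimate you propose $\|v\|_{W^{2,1}(S)}\le c|\log h|$ plus a dyadic-annulus decomposition with local energy and $L^2$ estimates (Schatz--Wahlbin style), while the paper follows Scott more literally: a direct estimate near $z_0$ (Lemma \ref{40}) and, away from $z_0$, a single global weighted bound $\int_S r^2|DE|^2\le ch^2|\log h|$ obtained from (\ref{41}) and (\ref{48}) via the BMO property of the logarithm (Lemma \ref{37}) and the weighted interpolation Lemmas \ref{43} and \ref{51}, followed by Cauchy--Schwarz against $\int r^{-2}\le c|\log h|$. Your route buys a smooth dual function and off-the-shelf Euclidean regularized-Green's-function machinery; the paper's route buys that the weighted analysis only ever touches the explicit function $\log r$, whose derivatives are known pointwise. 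Be aware, though, that in both routes essentially all of the mathematical weight sits in the $W^{1,1}$ estimate, which your proposal correctly isolates and outlines plausibly but leaves as a program --- it is precisely the content of (\ref{42}) and of Lemmas \ref{40}--\ref{51} in the paper.
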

The proof of the corresponding Euclidean statement is well-known, cf. \cite{Scott1976}.

The purpose of the remaining part of this section is to prove Theorem \ref{11}.

Let $z_0 \in S$ and 
\begin{equation}
\varphi_S : U \rightarrow B_{2r_1}(z_0), \quad (0,x) \mapsto (\psi_S(x),x), \quad x=(x^1, x^2)
\end{equation}
be the representation as graph of $B_{2r_1}(z_0)\subset S$ over $U\subset T_{z_0}S$
according to Remark \ref{70}.

\begin{definition} \label{92}
We set $B_j= B_{jr_1}(z_0)$, $j=1,2$,
 $\varphi = \varphi_S$ and  let $v$ be the Euclidean
 Green's function with respect to $-\Delta +I$ in $T_{z_0}S\equiv \mathbb{R}^2$ with singularity in $z_0$,
 i.e., more precisely,
 \begin{equation} \label{86}
  \begin{aligned}
   -\Delta v + v =& \delta_{z_0} \quad \text{in } B_{100}(z_0)\subset \mathbb{R}^2\\
   \partial_n v=& 0 \quad \text{on } \partial B_{100}(z_0).
  \end{aligned}
 \end{equation}
 Let $\zeta\in C^{\infty}_0(B_{\frac{3}{2}r_1}(z_0))$, $\zeta\equiv 1$ in $B_1$, be a cut-off function
 and set 
 \begin{equation}
 \tilde v(x) = v(\varphi^{-1}(x))\zeta(x), \quad 
  \tilde l(x) = l(\varphi^{-1}(x))\zeta(x), \quad x \in B_2,
 \end{equation} 
 where $l(z)=\log|z-z_0|$.
 \end{definition}
 
 There holds
 \begin{equation} \label{20}
  v-\frac{1}{2\pi}l \in H^2(B_{100}(z_0)),
 \end{equation}
 cf. \cite[Lemma 1, page 687]{Scott1976}.
W.l.o.g. we may assume $\|\tilde v\|_{C^2(B_2\setminus
 B_1)} \le c_0$ and
 \begin{equation} \label{71}
\|\tilde v-\frac{1}{2\pi}\tilde l\|_{W^{2,2}(S)} \le c
 \end{equation} 
 where $c_0, c$  are independent from $z_0$.
 
 The next Lemma shows in which sense $\tilde v$ is an approximative Green's function.
 \begin{lemma} \label{90}
  Let $\tilde v$ be as in Definition \ref{92}. Let $w \in H^1(S)\cap C^0(S)$ then
  \begin{equation}
   |\left<-\Delta_S\tilde v+\tilde v, w\right>
  - w(z_0)|\le  c \|w\|_{L^2(S)}.
   \end{equation}
 \end{lemma}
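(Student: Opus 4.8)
The plan is to show that the distribution $-\Delta_S\tilde v+\tilde v$ splits as
\begin{equation}
-\Delta_S\tilde v+\tilde v=\delta_{z_0}+g,\qquad g\in L^2(S),\quad \|g\|_{L^2(S)}\le c,
\end{equation}
with $c$ independent of $z_0$; the assertion then follows at once from $\langle\delta_{z_0}+g,w\rangle=w(z_0)+\int_S gw$ and Cauchy--Schwarz, $|\int_S gw|\le\|g\|_{L^2(S)}\|w\|_{L^2(S)}$. I stress at the outset that one must \emph{not} interpret $\langle-\Delta_S\tilde v,w\rangle$ by integrating the derivative onto $w$: since $\tilde v$ inherits the logarithmic singularity of $v$ and hence $\tilde v\notin H^1(S)$, the pairing $\int_S\langle D\tilde v,Dw\rangle$ is not finite for general $w$, and would in any case only yield a bound in terms of $\|w\|_{H^1(S)}$. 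The whole content of the lemma is that, precisely because $\tilde v$ is built from the \emph{Green's function} $v$ of $-\Delta+I$, the non-singular part of $-\Delta_S\tilde v+\tilde v$ is a genuine $L^2$ function and not merely an element of $H^{-1}(S)$.

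I would carry out the splitting in the Euclidean coordinates $x=(x^1,x^2)$ of $T_{z_0}S$ furnished by Remark \ref{70}, writing $\Delta_S=\Delta+(\Delta_S-\Delta)$ with $\Delta$ the flat Laplacian in $x$. For the flat part, the defining property (\ref{86}) of $v$ together with $\zeta\equiv1$ on $B_1$ gives, near $z_0$, the identity $-\Delta\tilde v+\tilde v=-\Delta v+v=\delta_{z_0}$, while the commutator terms $2\langle D\zeta,Dv\rangle+v\Delta\zeta$ generated off $z_0$ are supported in the annulus $\{D\zeta\neq0\}\subset B_{\frac32 r_1}\setminus B_1$, where $v$ is smooth; by the assumed bound $\|\tilde v\|_{C^2(B_2\setminus B_1)}\le c_0$ these are bounded. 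Since $S$ is tangent to $T_{z_0}S$ at $z_0$, the induced metric satisfies $\sqrt g(z_0)=1$, so the surface $\delta_{z_0}$ coincides with the flat one and the coefficient of the singular part is exactly $1$, which is what produces the desired $w(z_0)$.

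The crux is the correction $(\Delta_S-\Delta)\tilde v$, which I claim lies in $L^2(S)$ with a bound uniform in $z_0$. In the coordinates $x$ one has $(\Delta_S-\Delta)\tilde v=(g^{ij}-\delta^{ij})\partial_{ij}\tilde v+b^j\partial_j\tilde v$ with $b^j=\tfrac1{\sqrt g}\partial_i(\sqrt g\,g^{ij})$. Because $D\psi_S(z_0)=0$, the induced metric obeys $g_{ij}=\delta_{ij}+O(|x|^2)$ and $\sqrt g=1+O(|x|^2)$, whence $g^{ij}-\delta^{ij}=O(|x|^2)$ and $b^j=O(|x|)$. On the other hand (\ref{20}) shows that $v$ differs from $\tfrac1{2\pi}l$ by an $H^2$ function, so $\partial_{ij}\tilde v=O(|x|^{-2})+L^2$ and $\partial_j\tilde v=O(|x|^{-1})+L^2$ near $z_0$. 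The quadratic tangency is exactly what absorbs the blow-up: $O(|x|^2)\cdot O(|x|^{-2})=O(1)$ and $O(|x|)\cdot O(|x|^{-1})=O(1)$, while the remaining products pair a bounded coefficient with an $L^2$ factor, so $(\Delta_S-\Delta)\tilde v\in L^2(S)$. The $H^2$-remainder $\tilde v-\tfrac1{2\pi}\tilde l$ is handled directly by (\ref{71}), which controls its full $W^{2,2}$-norm and thus $\|-\Delta_S(\tilde v-\tfrac1{2\pi}\tilde l)+(\tilde v-\tfrac1{2\pi}\tilde l)\|_{L^2(S)}$. Collecting the smooth annulus terms, the metric-correction term, and this remainder into $g$, and using the compactness of $S$ (Remark \ref{70}) for the uniformity of the constants in $z_0$, finishes the splitting. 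The one delicate point, and the place where the construction of $\tilde v$ really enters, is this $L^2$-bound on $(\Delta_S-\Delta)\tilde v$: it hinges entirely on the cancellation of the leading flat singularity by the Green's function equation, leaving a $|x|^{-2}$ term that the $O(|x|^2)$ deviation of the surface metric can exactly tame.
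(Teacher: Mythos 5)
Your proposal is correct in substance, and it rests on exactly the same three ingredients as the paper's proof: the Euclidean Green's identity (\ref{86}), the quadratic tangency of the induced metric at $z_0$ (the estimates (\ref{80})), and the $H^2$ control (\ref{20}), (\ref{71}) of $\tilde v-\tfrac{1}{2\pi}\tilde l$. The difference is one of packaging, and it is a genuine one: you decompose the \emph{operator}, proving the strong-form identity $-\Delta_S\tilde v+\tilde v=\delta_{z_0}+g$ with $g\in L^2(S)$ bounded uniformly in $z_0$, whereas the paper decomposes the \emph{test function}, writing $w=\eta w+(1-\eta)w=w^1+w^2$ and keeping the whole computation in the weak form (\ref{7}), integrating by parts only in those terms that carry the vanishing factors $\sqrt{g}-1$ and $g^{ij}-\delta^{ij}$ (see (\ref{6}) and (\ref{72})). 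After those integrations by parts, the paper's integrands are precisely the products you estimate: $O(r^2)\cdot O(r^{-2})$ and $O(r)\cdot O(r^{-1})$ from the logarithmic part, plus bounded coefficients times the $L^2$ remainder. Two caveats. First, your opening claim that one \emph{must not} interpret the pairing through $\int_S\langle D\tilde v,Dw\rangle$ overstates the case: this is exactly the interpretation the paper adopts in (\ref{7}), and its proof succeeds because $Dw$ is never estimated against the raw singularity of $D\tilde v$ -- the dangerous terms are first rewritten by parts, which is legitimate precisely where the metric-deviation factors are present. Second, your route carries its own obligation which the proposal asserts rather than discharges: the distributional identity $-\Delta_S\tilde v+\tilde v=\delta_{z_0}+g$ needs justification (excise a disc $B_\epsilon$ around $z_0$, integrate by parts twice against a smooth test function, check that the boundary terms converge to $\phi(z_0)$ with coefficient $\sqrt{g}(z_0)=1$), and the resulting pairing must then be extended from smooth test functions to $w\in H^1(S)\cap C^0(S)$. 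That is the same level of care the paper spends legitimizing its selective integrations by parts, so neither route is really shorter; what yours buys is a cleaner, reusable statement ($\tilde v$ is a Green's function modulo a uniformly bounded $L^2$ error), while the paper's weak formulation feeds directly into the duality-type finite element arguments that follow it.
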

\begin{proof}
 Let $\eta\in C^{\infty}_0(B_{r_1}(z_0))$, 
 $\eta_{|B_{r_1/2}(z_0)}\equiv 1$, $|\eta|\le c$ and $|D\eta|\le c$.  We write
 \begin{equation}
 w =  \eta w + (1- \eta)w = w^1+w^2 
 \end{equation}
 and have (in local coordinates induced from $\varphi$) 
 \begin{equation} \label{7}
 \begin{aligned}
 \left<-\Delta_S \tilde v + \tilde v , w\right> =&  \left<-\Delta_S \tilde v + \tilde v , w^1\right>+\left<-\Delta_S \tilde v + \tilde v , w^2\right> \\
 =& \int_U \left(g^{ij}\frac{\partial  \tilde v}{\partial x^i}\frac{\partial w^1}{\partial x^j}+
  \tilde v w^1\right)\sqrt{g}+
 \int_U \left(g^{ij}\frac{\partial \tilde v}{\partial x^i}\frac{\partial w^2}{\partial x^j}+ \tilde vw^2\right)\sqrt{g}. 
  \end{aligned}
 \end{equation}
 We rewrite the second integral on the right-hand side of (\ref{7}) as
 \begin{equation} \label{6}
 \begin{aligned}
 \int & ( \tilde v-\delta^{ij}
 \frac{\partial^2 \tilde v}{\partial x^i\partial x^j})\sqrt{g}w^2-\int \frac{\partial}{\partial x^j}g^{ij}
 \frac{\partial  \tilde v}{\partial x^i}\sqrt{g}w^2\\
 &- \int
 g^{ij}\frac{\partial  \tilde v}{\partial x^i}\frac{1}{2}\sqrt{g}g^{kl}
 \frac{\partial}{\partial x^j}g_{kl}w^2
 +\int (\delta^{ij}-g^{ij})
 \frac{\partial^2 \tilde v}{\partial x^i\partial x^j}\sqrt{g}w^2 \\
 =& O(1)\|w\|_{L^2(S)}
 \end{aligned}
 \end{equation}
  where we used integration by parts and all integrals are over $U\setminus \varphi^{-1}(B_{\frac{r_1}{2}}(z_0))$.

Let $r$ denote the distance to $z_0$ in $T_{z_0}S$ then we obtain 
 \begin{equation} \label{80}
  \begin{aligned} 
   |\delta^{ij}-g^{ij}| \le& c r^2, \quad
   |g^{ij}| \le c, \quad
   \left|\frac{\partial}{\partial x^k}g_{ij}\right| 
   \le cr, \\
   & \left | \frac { \partial  \tilde l} {\partial x^i} \right| \le \frac{c}{r}
   , \quad \left| \frac{\partial^2  \tilde l }{\partial x^i\partial x^j}\right|\le \frac{c}{r^2}.
   \end{aligned}
   \end{equation}
   
 We rewrite the first integral on the right-hand side of (\ref{7}) as
 \begin{equation}
 \begin{aligned}
 \int_U &\left(g^{ij}\frac{\partial \tilde  v}{\partial x^i}\frac{\partial w^1}
 {\partial x^j}+ \tilde v w^1\right)\sqrt{g}\\
 =& \int_U \left(g^{ij}\frac{\partial \tilde  v}{\partial x^i}\frac{\partial w^1}
 {\partial x^j}+ \tilde v w^1\right)(\sqrt{g}-1) + \int_U (g^{ij}-\delta^{ij}) \frac{\partial \tilde  v}{\partial x^i}\frac{\partial w^1}
 {\partial x^j}\\
 &+ \int_U \left(\delta^{ij}\frac{\partial \tilde  v}{\partial x^i}\frac{\partial w^1}
 {\partial x^j}+ \tilde v w^1\right) \\
 =& w(z_0)+O(1)\|w\|_{L^2(S)}
 \end{aligned}
 \end{equation}
where we used (\ref{80}), H\"older's inequality, (\ref{86}) and that we are allowed to perform integration by parts in the integrals with the factors $(\sqrt{g}-1)$ and $g^{ij}-\delta^{ij}$, note, that
   \begin{equation} \label{72}
   \begin{aligned}
    \int_{U}&\left|(\delta^{ij}-g^{ij})
 \frac{\partial^2 \tilde v}{\partial x^i\partial x^j} w^1\right| \\
 \le&   \int_{U}\left|(\delta^{ij}-g^{ij})\left(
 |\frac{\partial^2 (\tilde v-\tilde l)}{\partial x^i\partial x^j}|+|
 \frac{\partial^2 \tilde l}{\partial x^i\partial x^j}|\right) w^1\right| \\
  \le& c\|w\|_{L^2(S)}.
  \end{aligned}
   \end{equation}
 \end{proof}
 
\begin{remark} 
From now, we denote the approximative Green's function $\tilde v$ by $g$ (there will be no ambiguity with the symbol for the determinant of the metric).  
\end{remark}
 
We define an approximation $g_h\in V_h$ of $g$ by
\begin{equation} \label{110}
a_h(g_h, v_h)=a(g, V_h)
\end{equation}
for all $v_h \in V_h$.

\begin{lemma}
Assume 
\begin{equation} \label{13}
\|g-G_h\|_{W^{1,1}(S)} \le c h |\log h|
\end{equation}
then Theorem \ref{11} follows.
\end{lemma}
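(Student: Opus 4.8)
The plan is to establish the bound pointwise at an arbitrary fixed $z_0\in S$ and then take the supremum, which is legitimate because by Remark \ref{70} and (\ref{71}) all constants are uniform in $z_0$. Write $e=u-U_h$; since $u\in W^{2,\infty}(S)$ and $U_h$ is continuous, $e\in H^1(S)\cap C^0(S)$, so Lemma \ref{90} applies to $w=e$ and gives
\[
|e(z_0)-\langle -\Delta_S g+g,e\rangle|\le c\|e\|_{L^2(S)}\le ch^2\|f\|_{L^2(S)}\le ch^2\|u\|_{W^{2,\infty}(S)},
\]
using Lemma \ref{64} and $\|f\|_{L^2(S)}\le c\|u\|_{W^{2,\infty}(S)}$ (as $f=-\Delta_S u+u$ on the compact surface $S$). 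It therefore remains to estimate $\langle -\Delta_S g+g,e\rangle=a(g,e)$, where I read $a(g,\cdot)$ as the distributional pairing $\langle -\Delta_S g+g,\cdot\rangle$; this is legitimate because $g\in W^{1,1}(S)$ (which follows from (\ref{71}) together with $\tilde l\in W^{1,1}(S)$) paired against $C^0\cap W^{1,\infty}$ functions.

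The core of the argument is to reproduce the twofold Galerkin orthogonality of Scott's Euclidean proof, which is now only approximate because the two bilinear forms live on $S$ and on $S_h$. Splitting $a(g,e)=a(g-G_h,e)+a(G_h,e)$, I would first turn the defining relation (\ref{110}) into an approximate orthogonality: for any $\varphi_h\in V_h$ with lift $\Phi_h$,
\[
a(g-G_h,\Phi_h)=a_h(g_h,\varphi_h)-a(G_h,\Phi_h)=O(h^2)\|G_h\|_{H^1(S)}\|\Phi_h\|_{H^1(S)},
\]
where the last step is (\ref{101})--(\ref{100}). The same two transfer estimates, together with the weak form $a(G_h,u)=\int_S fG_h$ and the discrete equation $a_h(g_h,u_h)=\int_{S_h}f_h g_h$, yield
\[
a(G_h,e)=\int_S fG_h-a(G_h,U_h)=O(h^2)\big(\|f\|_{L^\infty}\|G_h\|_{L^1(S)}+\|G_h\|_{H^1(S)}\|U_h\|_{H^1(S)}\big).
\]

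Choosing $\varphi_h=I_h\tilde u$, the interpolant of the lift $\tilde u$ of $u$, with lift $\Phi_h$, I would insert it into the remaining piece, $a(g-G_h,e)=a(g-G_h,u-\Phi_h)+a(g-G_h,\Phi_h-U_h)$. The second summand is of the approximately orthogonal type above, hence $O(h^2)\|G_h\|_{H^1(S)}\|\Phi_h-U_h\|_{H^1(S)}$, while the first is the decisive term, bounded by H\"older against hypothesis (\ref{13}):
\[
|a(g-G_h,u-\Phi_h)|\le\|g-G_h\|_{W^{1,1}(S)}\|u-\Phi_h\|_{W^{1,\infty}(S)}\le ch|\log h|\cdot ch\|u\|_{W^{2,\infty}(S)},
\]
using the standard interpolation bound $\|u-\Phi_h\|_{W^{1,\infty}(S)}\le ch\|u\|_{W^{2,\infty}(S)}$. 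This one term produces the asserted order $h^2|\log h|$.

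It then only remains to verify that every remainder stays below $h^2|\log h|\,\|u\|_{W^{2,\infty}(S)}$. Here $\|G_h\|_{L^1(S)}\le\|g\|_{L^1(S)}+\|g-G_h\|_{W^{1,1}(S)}\le c$, $\|U_h\|_{H^1(S)}\le c\|u\|_{W^{2,\infty}(S)}$ by Lemma \ref{63}, and $\|\Phi_h-U_h\|_{H^1(S)}\le\|u-\Phi_h\|_{H^1(S)}+\|u-U_h\|_{H^1(S)}\le ch\|u\|_{W^{2,\infty}(S)}$. The one genuinely new ingredient, which I expect to be the main obstacle, is a polylogarithmic control $\|G_h\|_{H^1(S)}\le c|\log h|^{1/2}$ of the discrete approximate Green's function, needed to tame the geometric remainders attached to the singular factor; I would obtain it by testing (\ref{110}) with $v_h=g_h$, so that $\|G_h\|_{H^1(S)}^2\le c\,a_h(g_h,g_h)=c\langle -\Delta_S g+g,G_h\rangle\le c(\|G_h\|_{L^\infty(S)}+\|G_h\|_{L^2(S)})$ by Lemma \ref{90}, and then absorbing $\|G_h\|_{L^\infty(S)}\le c|\log h|^{1/2}\|G_h\|_{H^1(S)}$ through the two-dimensional discrete Sobolev inequality for finite element functions. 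With this bound the remaining terms are $O(h^2|\log h|^{1/2}\|u\|_{W^{2,\infty}(S)})$ and $O(h^3|\log h|^{1/2}\|u\|_{W^{2,\infty}(S)})$, both of lower order; collecting all contributions and taking the supremum over $z_0\in S$ gives Theorem \ref{11}.
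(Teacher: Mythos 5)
Your proposal is correct, and its skeleton coincides with the paper's: reduce to the point value via Lemma \ref{90} and Lemma \ref{64}, split off the discrete Green's function, exploit the two Galerkin-type relations (\ref{110}) and (\ref{3}), and extract the leading term $\|g-G_h\|_{W^{1,1}(S)}\|u-\Phi_h\|_{W^{1,\infty}(S)}\le ch^2|\log h|\,\|u\|_{W^{2,\infty}(S)}$ by H\"older together with the interpolation estimate (\ref{16}). Where you genuinely diverge is in taming the geometric $O(h^2)$ remainders coming from (\ref{100})--(\ref{101}). The paper systematically pairs these as $W^{1,1}\times W^{1,\infty}$, always placing the Green's-function quantities ($g$, $G_h$, $g-G_h$, all uniformly bounded in $W^{1,1}$ thanks to (\ref{13}) and the integrability of the logarithmic singularity) against $u$-side quantities, and then closes the argument with the inverse-estimate bound $\|u-U_h\|_{W^{1,\infty}(S)}+\|V_h-U_h\|_{W^{1,\infty}(S)}\le c\|u\|_{W^{2,\infty}(S)}$ -- this is exactly why the paper's proof ends with that inverse-estimate display. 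You instead pair the remainders as $H^1\times H^1$, which forces you to control $\|G_h\|_{H^1(S)}$; since $g\notin H^1$ near the singularity this is not free, and you supply the needed bound $\|G_h\|_{H^1(S)}\le c|\log h|^{1/2}$ by testing (\ref{110}) with $g_h$, invoking Lemma \ref{90} for $w=G_h$, and absorbing via the two-dimensional discrete Sobolev inequality $\|G_h\|_{L^{\infty}}\le c|\log h|^{1/2}\|G_h\|_{H^1}$ -- an ingredient nowhere used in the paper, though standard (it is in the cited Brenner--Scott, and it transfers to lifted finite element functions since lifting preserves sup-norms and distorts $H^1$-norms only by bounded factors). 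The trade-off is clean: the paper's route needs no discrete Sobolev inequality but must handle $W^{1,\infty}$ norms of the discrete error; your route keeps all discrete quantities in energy norms at the price of the logarithmic stability bound for $G_h$, and the extra $|\log h|^{1/2}$ factors you incur are harmlessly dominated by the leading $h^2|\log h|$ term.
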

\begin{proof}
From Lemma \ref{90} we conclude 
\begin{equation} \label{12}
\begin{aligned}
(u-U_h)(z_0) =& a(g, u-U_h) +O(h^2)\|f\|_{L^2(S)}\\
=& a_h(G-g_h, U-u_h)+O(h^2)\|g\|_{W^{1,1}(S)}\|u-U_h\|_{W^{1, \infty}(S)}\\
& +a_h(g_h, U-u_h).
\end{aligned}
\end{equation}
We estimate
\begin{equation}
 \begin{aligned}
  a_h(g_h, U-u_h)=& a(G_h, u)-a_h(g_h, u_h)+O(h^2) \|G_h\|_{W^{1,1}(S)} \|u\|_{W^{1,\infty}(S)}\\
=& O(h^2)\left( \|G_h\|_{W^{1,1}(S)} \|u\|_{W^{1,\infty}(S)}+\|G_h\|_{L^1(S)}\|u\|_{W^{2,\infty}(S)}\right).
 \end{aligned}
\end{equation}
Furthermore, we have
\begin{equation} 
\begin{aligned}
a_h(G-g_h, U-u_h)
=& a_h(G-g_h, U-v_h)+a_h(G-g_h, v_h-u_h), 
\end{aligned}
\end{equation}
rewrite the second summand by using Lemma \ref{90} as
\begin{equation}
\begin{aligned}
a_h(G-&g_h, v_h-u_h)  \\
=&  a(g, V_h-U_h) + O(h^2)\|g\|_{W^{1,1}(S)}\|V_h-U_h\|_{W^{1, \infty}(S)}\\
& -a_h(g_h, v_h-u_h)\\
\le& O(h^2)\|g\|_{W^{1,1}(S)}\|V_h-U_h\|_{W^{1, \infty}(S)} 
\end{aligned}
\end{equation}
and estimate the first summand as follows
\begin{equation}
\begin{aligned}
|a_h(G-g_h, U-v_h)| \le& \|g-G_h\|_{W^{1,1}(S)}\|u-V_h\|_{W^{1,\infty}(S)}
\end{aligned}
\end{equation}
We let $v_h\in V_h$ be the interpolation of $u$ and obtain the claim from 
\begin{equation}
\begin{aligned}
\|u-U_h\|_{W^{1,\infty}(S)} \le& \|u-V_h\|_{W^{1,\infty}(S)}+\|V_h-U_h\|_{W^{1,\infty}(S)} \\
\le& \|u-v_h\|_{W^{1,\infty}(S)}\\
&+ch^{-1}\left(\|V_h-u\|_{W^{1,2}(S)}
+\|u-U_h\|_{W^{1,2}(S)}\right)
\end{aligned}
\end{equation}
which holds in view of estimate (\ref{16}) .
\end{proof}

In order to show  (\ref{13}) we prove as first step the following Lemma.
\begin{lemma} \label{35}
We have
\begin{equation}
\|g-G_h\|_{L ^2(S)}\le ch
\end{equation}
where $c$ is independent of $z_0$.
\end{lemma}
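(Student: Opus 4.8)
The plan is to run an Aubin--Nitsche duality argument, the only subtlety being that $g$ is not in $H^1(S)$, so the energy pairing with $g$ must be interpreted through Lemma \ref{90}. Write $e=g-G_h$ and note $e\in L^2(S)$, since the logarithmic singularity of $g$ lies in every $L^p$ and $G_h$ is a finite element function. Let $\phi\in H^2(S)$ solve $-\Delta_S\phi+\phi=e$, so that $\|\phi\|_{H^2(S)}\le c\|e\|_{L^2(S)}$ by elliptic regularity on the fixed surface $S$, with $c$ independent of $z_0$. Integrating by parts once (legitimate because $g\in W^{1,1}(S)\cap L^2(S)$ and $\phi\in H^2(S)$ on the closed surface) gives the identity $\|e\|_{L^2(S)}^2=(g-G_h,e)=a(g,\phi)-a(G_h,\phi)=a(e,\phi)$, where $a(g,\cdot)$ is read as the distributional pairing $\left<-\Delta_S g+g,\cdot\right>$ of Lemma \ref{90}.

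Next I would exploit orthogonality to remove the problematic term $a(G_h,\cdot)$. Let $\Psi_h$ be the lift of the Galerkin approximation of $\phi$, chosen $a$-orthogonal to all lifts of $V_h$-functions; this projection is well defined for small $h$ because $a$ restricted to lifts is coercive (being an $O(h^2)$ perturbation of $a_h$), and it satisfies the regular estimate $\|\phi-\Psi_h\|_{L^2(S)}\le ch^2\|\phi\|_{H^2(S)}$ exactly as in Lemma \ref{64}. Splitting $a(e,\phi)=a(e,\phi-\Psi_h)+a(e,\Psi_h)$, the contribution $a(G_h,\phi-\Psi_h)$ vanishes by the choice of $\Psi_h$, while $a(e,\Psi_h)$ reduces, via the defining relation (\ref{110}) together with the transfer estimates (\ref{101}) and (\ref{100}), to a geometric remainder of size $O(h^2)\|G_h\|_{W^{1,1}(S)}\|\Psi_h\|_{W^{1,\infty}(S)}$. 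Using the a priori bound $\|G_h\|_{W^{1,1}(S)}\le c|\log h|$ for the discrete Green's function, this remainder is of order $h^2|\log h|$, hence lower order.

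There remains $a(e,\phi-\Psi_h)=a(g,\phi-\Psi_h)$, to which I would apply Lemma \ref{90} with $w=\phi-\Psi_h\in H^1(S)\cap C^0(S)$, obtaining $a(g,\phi-\Psi_h)=(\phi-\Psi_h)(z_0)+O(\|\phi-\Psi_h\|_{L^2(S)})$; the $L^2$-remainder is already $O(h^2)\|\phi\|_{H^2(S)}$. For the point value I would use $|(\phi-\Psi_h)(z_0)|\le\|\phi-\Psi_h\|_{L^\infty(S)}\le ch\|\phi\|_{H^2(S)}$, proved by inserting the nodal interpolant and combining the sharp two dimensional estimate $\|\phi-I_h\phi\|_{L^\infty(S)}\le ch|\phi|_{H^2(S)}$ with the inverse estimate $\|I_h\phi-\Psi_h\|_{L^\infty(S)}\le ch^{-1}\|I_h\phi-\Psi_h\|_{L^2(S)}\le ch\|\phi\|_{H^2(S)}$. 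Collecting terms yields $\|e\|_{L^2(S)}^2\le ch\|\phi\|_{H^2(S)}+O(h^2|\log h|)\le ch\|e\|_{L^2(S)}+O(h^2|\log h|)$, whence $\|g-G_h\|_{L^2(S)}\le ch$ with $c$ independent of $z_0$.

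The main obstacle is the borderline regularity in two dimensions: $g$ just fails to lie in $H^1$ and $\phi$ just fails to lie in $W^{2,\infty}$, so neither the usual energy quasi-optimality for $g$ nor a direct $L^\infty$-interpolation bound for $\phi$ is available. Both difficulties are funneled into the single point evaluation $(\phi-\Psi_h)(z_0)$, which is precisely where a logarithm threatens to appear; pairing the sharp $H^2\hookrightarrow C^0$ interpolation estimate with an inverse estimate is what keeps this term at the clean order $h$ rather than $h|\log h|$. A secondary technical point is the careful bookkeeping of the surface-versus-discrete-surface corrections through (\ref{101}) and (\ref{100}) and the verification that $\|G_h\|_{W^{1,1}(S)}\le c|\log h|$ holds uniformly in $z_0$.
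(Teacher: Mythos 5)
Your overall strategy is genuinely different from the paper's: you run a direct Aubin--Nitsche duality on $e=g-G_h$, interpreting the singular pairing $a(g,\cdot)$ through Lemma \ref{90}, using an $a$-Ritz projection $\Psi_h$ onto the lifted finite element space to kill $a(G_h,\phi-\Psi_h)$, and controlling the point value $(\phi-\Psi_h)(z_0)$ by the $L^\infty$-interpolation estimate (\ref{16}) combined with an inverse estimate. That part of the argument is sound. The paper instead regularizes the delta function: it builds the linear function $q$ with $\int_\tau qp=p(z_0)$ on the triangle containing $\tilde z_0$, extends $\psi(v)=a(g,v)-\left<\delta_{z_0},v\right>$ to an $L^2$ functional by Hahn--Banach, splits $g_h$ (up to $O(h^2)$) into the two discrete solutions with data $\delta_{\tilde z_0}$ and $\Psi$, and estimates the three resulting differences by Lemma \ref{63} and a duality step involving only $\delta_{z_0}-\tilde\delta$. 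The point of that detour is precisely to avoid ever needing a norm bound on the discrete Green's function itself --- and that is exactly where your proof has a genuine gap.

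The gap is the term $a(e,\Psi_h)$. You reduce it to a geometric remainder of size $O(h^2)\|G_h\|_{W^{1,1}(S)}\|\Psi_h\|_{W^{1,\infty}(S)}$ and declare it of order $h^2|\log h|$. This requires two bounds, neither of which you have. First, $\|\Psi_h\|_{W^{1,\infty}(S)}\le c$ is false in general: $\phi$ is only in $H^2(S)$, which in two dimensions does not embed into $W^{1,\infty}$, and if $\nabla\phi$ is unbounded then $\|\Psi_h\|_{W^{1,\infty}}$ must blow up as $h\to 0$ (a uniform bound would pass to the limit and force $\phi\in W^{1,\infty}$); an inverse estimate only yields $ch^{-1}\|\phi\|_{H^1}$, which destroys your claimed order. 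Second, $\|G_h\|_{W^{1,1}(S)}\le c|\log h|$ is invoked as an ``a priori bound'' but is never proved, and it is essentially the quantity the whole section is laboring to control (it follows from (\ref{13}) plus $\|g\|_{W^{1,1}(S)}\le c$), so using it inside the proof of Lemma \ref{35} is circular unless you establish it independently, e.g.\ via a discrete Sobolev inequality --- you call this a ``secondary technical point,'' but it is the crux. Finally, even granting both bounds, your last display $\|e\|_{L^2}^2\le ch\|e\|_{L^2}+O(h^2|\log h|)$ only gives $\|e\|_{L^2}\le ch|\log h|^{1/2}$, not $\le ch$; since Remark \ref{44}(ii), Lemma \ref{40} and (\ref{41}) consume the clean bound $ch$, this loss propagates. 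The repairable version of your argument takes the geometric remainder in the symmetric pairing $O(h^2)\|G_h\|_{H^1(S)}\|\Psi_h\|_{H^1(S)}$: one has $\|\Psi_h\|_{H^1(S)}\le\|\phi\|_{H^1(S)}$ by the projection property, and the crude bound $\|g_h\|_{H^1(S_h)}\le ch^{-1}$ follows directly from (\ref{110}), Lemma \ref{90} and the inverse estimate $\|g_h\|_{L^\infty(S_h)}\le ch^{-1}\|g_h\|_{L^2(S_h)}$; this makes $a(e,\Psi_h)=O(h)\|\phi\|_{H^2(S)}$ and closes the proof at the stated order $ch$, uniformly in $z_0$.
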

\begin{proof}
Let $\tau$ be a triangle in $T_h$ containing the lift $\tilde z_0$ of $z_0$, and let $q$ be the linear function with
\begin{equation}
\int_{\tau}qp = p(z_0)
\end{equation}
for all linear functions $p$. Because $\tau$ contains a disk of radius $\gamma_0 h$ we see that
\begin{equation} \label{14}
\sup_{\tau}|q| \le c h^{-2}.
\end{equation}
We extend the domain of definition of $q$ to $S_h$ by zero and set $\tilde \delta = Q$.

We
define 
\begin{equation}
\begin{aligned}
\psi(v) =& a(g,v)-\left<\delta_{z_0}, v\right>, \quad v \in W^{1, \infty}(S)\cap C^0(S),
\end{aligned}
\end{equation}
From Lemma \ref{90} we deduce that
\begin{equation}
|\psi(v)| \le c_0 \|v\|_{L^2(S)}
\end{equation}
so that by Hahn-Banach Theorem $\psi$ can be extended to a linear functional on $L^2(S)$-- denoted by $\psi$ as well -- with norm $\le c_0$. W.l.o.g we may assume that $\psi \in L^2(S)$. 

Let $w\in H^1(S)$ and $w_h\in V_h$ be the solutions of 
\begin{equation}
\begin{aligned}
a(w,v) =& \int_S\psi v \quad \forall v \in H^1(S) \\
a_h(w_h, v_h) = &\int_{S_h}\Psi v_h \quad \forall v_h \in V_h. 
\end{aligned}
\end{equation}
 Lemma \ref{63}  leads to
\begin{equation}
\|w-W_h\|_{H^1(S)} \le ch
\end{equation} 
where $c$ independent from $z_0$. 

Let $z_h \in V_h$ with 
\begin{equation}
a_h(z_h, v_h) = \left<\delta_{\tilde z_0}, v_h\right> = v_h(\tilde z_0).
\end{equation}
Let $\tilde g$ solve
\begin{equation}
-\Delta_S \tilde g + \tilde g = \tilde \delta.
\end{equation}
Since $z_h$ can be seen as finite element approximation of $\tilde g$ we have
in view of Lemma \ref{63}
\begin{equation}
\|\tilde g-Z_h\|_{H^s(S)} \le c h^{2-s}\|\tilde \delta\|_{L^2(S)}\le c h^{1-s}, \quad s=0,1.
\end{equation}
In view of 
\begin{equation}
\begin{aligned}
a_h(g_h-w_h, v_h) =& a(g, V_h)-\int_{S_h}\Psi v_h \\
=& a(g, V_h)-\int_S \psi V_h+O(h^2)\|\psi\|_{L^2(S)}\|V_h\|_{L^2(S)} \\
=& \left<\delta_{z_0}, V_h\right> +O(h^2)\|\psi\|_{L^2(S)}\|V_h\|_{L^2(S)}, \quad \forall v_h\in V_h
\end{aligned}
\end{equation}
we deduce 
\begin{equation}
\|z_h-(g_h-w_h)\|_{L^2(S)} \le O(h^2).
\end{equation}
To estimate 
\begin{equation}
\|g-G_h\|_{L^2(S)} = \|(g-w-\tilde g)+(\tilde g-Z_h)+(w-W_h)\|_{L^2(S)}+O(h^2)
\end{equation}
we need to estimate $\|g-w-\tilde g\|_{L^2(S)}$. Let $\varphi \in C^{\infty}(S)$ and $\tilde w$ a solution of
\begin{equation}
-\Delta_S \tilde w + \tilde w = \varphi
\end{equation}
then
\begin{equation}
\begin{aligned}
\int_S(g-w-\tilde g)\varphi =& a(g-w-\tilde g, \tilde w) \\
=& \left<\delta_{z_0}-\tilde \delta, \tilde w\right> \\
=& \left<\delta_{z_0}-\tilde \delta, \tilde w-\tilde w_I\right> \\
\le& \|\tilde w-\tilde w_I\|_{L^{\infty}(S)} + \|\tilde \delta\|_{L^2(S)}\|\tilde w-\tilde w_I\|_{L^2(S)} \\
\le& O(h)\|\varphi\|_{L^2(S)}.
\end{aligned}
\end{equation}
Here, $\tilde W_I$ denotes the linear interpolation of $\tilde w$, $\tilde w_I$ its lift to $S$ and we used, cf. \cite[Theorem 4.4.20]{BrennerScott},
\begin{equation} \label{16}
h^{\frac{2}{p}}\|\chi-\chi_I\|_{L^{\infty}(S)}+\sum_{j=0}^1h^j\|\chi-\chi_I\|_{W^{j,p}(S)} \le c h^2 \|\chi\|_{W^{2,p}(S)}, \quad 1 \le p\le \infty,
\end{equation}
for $\chi \in H^2(S)$ and $\chi_I$ the linear interpolation of $\chi$ (, and the right-hand side possibly unbounded).
\end{proof}

\begin{remark} \label{44} 
(i) Estimate (\ref{13}) follows immediately if we show
\begin{equation} \label{42}
 \|\tilde l-\tilde L_h\|_{W^{1,1}(S)} \le c h |\log h|
 \end{equation}
 where $\tilde l_h\in V_h$ is defined by 
 \begin{equation}
 a_h(\tilde l_h, v_h) = a(\tilde l, V_h) \quad \forall v_h \in V_h.
 \end{equation}
 (ii) There holds
 \begin{equation} \label{39}
  \|\tilde l-\tilde L_h\|_{L^2(S)} \le ch.
 \end{equation}
\end{remark}
\begin{proof}
(i) 
We have
\begin{equation}
a_h(g_h-\frac{1}{2\pi}\tilde l_h, v_h) = a(g-\frac{1}{2\pi}\tilde l, V_h)
\end{equation}
so that in view of (\ref{20}) we conclude from Lemma \ref{63} that
\begin{equation} \label{89}
\|G_h-\frac{1}{2\pi}\tilde L_h-(g-\frac{1}{2\pi}\tilde l)\|_{H^1(S)} \le c h
\end{equation}
and the triangle inequality implies (\ref{13}). 

(ii) Use (\ref{89}), the triangle inequality and  Lemma \ref{35}.
\end{proof}

In the remaining part of this section we prove (\ref{42}).
We recall that $l(z)=\log |z-z_0|$ is defined in $T_{z_0}S\equiv \mathbb{R}^2$, that $r$ denotes the distance to $z_0$ in $T_{z_0}S$ and state that $l$ has bounded mean oscillation in the following sense.
\begin{lemma} \label{37}
 Let $z_1\in \mathbb{R}^2$ and $0<\rho<\infty$. Then there is a constant $l_0\in \mathbb{R}$ 
 depending on $z_1$ and $\rho$ such that
 \begin{equation}
  \int_{\{|z-z_1|\le \rho\}}(l-l_0)^2 \le 9 \pi \rho^2.
 \end{equation}
 \end{lemma}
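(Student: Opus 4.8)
The plan is to read the statement as a bounded-mean-oscillation estimate for $l(z)=\log|z-z_0|$. Since the disk $D=\{|z-z_1|\le\rho\}$ has area $\pi\rho^2$, the asserted bound $9\pi\rho^2$ says precisely that the mean-square deviation of $l$ from a suitable constant $l_0$ is at most $9$. The only source of oscillation is the logarithmic singularity at $z_0$, so I would organize the proof as a dichotomy governed by the distance $d:=|z_1-z_0|$ from the center of the disk to the singularity, splitting at the threshold $d=2\rho$. Translating so that $z_0$ is the origin is harmless and makes $l(z)=\log|z|$ with singularity at $0$.

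In the \emph{far} regime $d\ge 2\rho$ the function $l$ is smooth on $D$, and I would take $l_0=\log d$. For $z\in D$ one has $|z|\in[d-\rho,d+\rho]$, hence $|z|/d\in[1-\rho/d,\,1+\rho/d]\subseteq[\tfrac12,\tfrac32]$ because $\rho/d\le\tfrac12$. Therefore $|\log|z|-\log d|=|\log(|z|/d)|\le\log 2$ pointwise, and integrating gives $\int_D(l-l_0)^2\le \pi\rho^2(\log 2)^2$, comfortably below $9\pi\rho^2$.

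In the \emph{near} regime $d<2\rho$ the singularity may lie inside or close to $D$, and the key observation is the inclusion $D\subseteq B_{3\rho}(0)$, since $|z|\le d+\rho<3\rho$ for $z\in D$. Here I would choose $l_0=\log(3\rho)$, so that the integrand is nonnegative on the enlarged ball, and estimate by positivity
\[
\int_D\bigl(\log|z|-\log(3\rho)\bigr)^2\,dz\le\int_{B_{3\rho}(0)}\Bigl(\log\tfrac{|z|}{3\rho}\Bigr)^2\,dz .
\]
Passing to polar coordinates and substituting $t=|z|/(3\rho)$ reduces this to $2\pi(3\rho)^2\int_0^1 t(\log t)^2\,dt$. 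The crux is the explicit moment $\int_0^1 t(\log t)^2\,dt=\tfrac14$, which yields the value $\tfrac{9}{2}\pi\rho^2\le 9\pi\rho^2$, completing this case.

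The main obstacle, and the reason a case split is unavoidable, is that no single choice of $l_0$ or single estimation scheme covers all configurations: enlarging $D$ to a ball centered at $z_0$ is wasteful and blows up when $d$ is large (so one must instead exploit the near-constancy of $\log$ there), whereas when $d$ is small the singularity is genuine and one must rely on the square-integrability of $\log$ near $0$. The delicate point is calibrating the threshold ($2\rho$) and the auxiliary constants so that \emph{both} estimates land at or below $9\pi\rho^2$; the near case is the binding one, and it hinges entirely on the elementary integral $\int_0^1 t(\log t)^2\,dt=\tfrac14$.
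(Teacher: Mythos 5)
Your proof is correct, but it takes a different route from the paper in the trivial sense that the paper offers no argument at all: it disposes of this lemma by citing \cite[Lemma 2, page 688]{Scott1976}, where the statement appears verbatim as the bounded-mean-oscillation property of the logarithm. Your dichotomy in $d=|z_1-z_0|$ is sound in both branches: in the far regime $d\ge 2\rho$ the pointwise bound $|\log(|z|/d)|\le\log 2$ gives $\pi\rho^2(\log 2)^2$, and in the near regime the inclusion $D\subseteq B_{3\rho}(z_0)$ together with the moment $\int_0^1 t(\log t)^2\,dt=\tfrac14$ gives $\tfrac92\pi\rho^2$; both are below $9\pi\rho^2$. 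What your argument buys over the paper's citation is self-containedness and transparency about the constant: it shows the worst case is $\tfrac92\pi\rho^2$, so the stated bound is not sharp, and it makes visible that the only delicate configuration is the singularity near or inside the disk. One cosmetic remark: your justification ``so that the integrand is nonnegative'' for the choice $l_0=\log(3\rho)$ is a red herring, since the integrand is a square and hence nonnegative for any $l_0$; the step $\int_D\le\int_{B_{3\rho}(z_0)}$ needs only the inclusion of domains, and the specific choice of $l_0$ matters only for making the resulting radial integral computable.
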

 \begin{proof}
  This is the assertion of \cite[Lemma 2 on page 688]{Scott1976}. 
 \end{proof}
 \begin{remark} \label{36}  
 In the following we will consider lifts of objects defined on $B_{2r_1}(z_0)\subset S$, $U\subset T_{z_0}S$ or a suitable portion of $S_h$ to another of these three surfaces with respect to the representation as graph over $U$ in (perpendicular) Euclidean coordinates as described in Remark \ref{70}. By 
adding the superscripts $S$, $T$ or $h$ we indicate to which surface the object is lifted, e.g. 
let $M \subset B_{2r_1}(z_0)\subset S$ then $M^T$ denotes its lift to $T_{z_0}S$.
Similar correction terms as in (\ref{101}) and (\ref{100}) appear when we lift integrands of (with a power of $r$) weighted $W^{1,p}$-norms, i.e. if we estimate such a norm then the lift produces (at most) a constant as factor 
on the right-hand side of the estimates. 
\end{remark}
 
We estimate the error $E=\tilde l-\tilde L_h$ near $z_0$.
\begin{lemma} \label{40}
Let $0<\rho <c_1h$ be given and $B=\{|z-z_0|\le \rho\}$ a ball in $T_{z_0}S$. Then
\begin{equation}
\int_{S\cap B^S}|\varphi^{-1}_{z_0}(z)-z_0|^{\beta}|DE|^p 
\le c \rho^{\beta}h^{2-p}
\end{equation}
for $1\le p < \beta +2$.
\end{lemma}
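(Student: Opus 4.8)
The plan is to estimate the local error $E = \tilde l - \tilde L_h$ near the singularity $z_0$ by combining the already-established global $L^2$-bound from Remark~\ref{36}(ii) with an inverse estimate valid on the triangles meeting $B$. The key observation is that $\tilde L_h$ is the Galerkin projection of $\tilde l$ (via the bilinear form $a_h$), so its $H^1$-accuracy is controlled globally; what we need here is a \emph{local} $L^p$ bound on $DE$ with a weight $r^\beta$ that remains integrable precisely because $p<\beta+2$. First I would pass, using the lift correspondence of Remark~\ref{36}, from the integral over $S\cap B^S$ to the corresponding integral over the flat ball $B\subset T_{z_0}S$, paying only a constant factor; this reduces everything to a Euclidean computation on $B$ and the few triangles of $T_h$ that intersect it.

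\textbf{Main steps.} On $B$, the weight $|z-z_0|^\beta$ is bounded by $\rho^\beta$, so it suffices to show $\int_{B}|DE|^p \le c\,h^{2-p}$ after extracting $\rho^\beta$. Since $\rho < c_1 h$, the ball $B$ is covered by a bounded number (depending only on $\gamma_0$) of triangles $\tau$, each of area comparable to $h^2$. On each such $\tau$, $\tilde L_h$ is affine, so $D\tilde L_h$ is constant, and I would invoke the inverse inequality on quasi-uniform meshes to pass from the $L^2$-norm to the $L^p$-norm of $DE$: on a single triangle $\|D\tilde L_h\|_{L^p(\tau)}\le c\,h^{2/p-1}\|D\tilde L_h\|_{L^2(\tau)}$, while for the smooth part $D\tilde l$ one uses the pointwise bound $|D\tilde l|\le c/r$ from (\ref{80}) together with the fact that $\int_{B}r^{-p}$ is finite exactly when $p<2<\beta+2$. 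Summing the finitely many triangle contributions and feeding in the global control $\|\tilde l-\tilde L_h\|_{H^1}\le c$ (or the sharper $L^2$-bound $ch$ from (\ref{39}) where a gradient is traded for a factor $h^{-1}$) yields the stated order $h^{2-p}$.

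\textbf{Expected obstacle.} The delicate point is handling the singular gradient $D\tilde l\sim 1/r$ inside $B$ \emph{simultaneously} with the inverse-estimate blow-up of $D\tilde L_h$: the finite element part contributes a factor $h^{2/p-1}$ per triangle in going from $L^2$ to $L^p$, and one must verify that multiplying by the $L^2$-mass of $DE$ on the triangles near $z_0$ does not overwhelm the target $h^{2-p}$. I expect the cleanest route is to split $E = (\tilde l - \tilde l_I) + (\tilde l_I - \tilde L_h)$, where $\tilde l_I$ is the interpolant: the interpolation part is estimated directly by the weighted interpolation bound coming from (\ref{16}) adapted to the singular function, and the discrete part $\tilde l_I - \tilde L_h$ is purely piecewise affine, so the inverse estimate applies cleanly and its $L^2$-norm is bounded by $\|\tilde l - \tilde L_h\|_{L^2}+\|\tilde l-\tilde l_I\|_{L^2}\le ch$. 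Putting $\rho^\beta$ back in front then gives $c\rho^\beta h^{2-p}$, and the restriction $p<\beta+2$ is exactly what makes the weighted singular integral $\int_B r^\beta r^{-p}$ converge.
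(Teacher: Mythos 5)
Your proposal has two genuine gaps, and they occur exactly at the points where the paper's proof needs its two key devices. The first is the opening reduction: ``bound the weight by $\rho^\beta$, then show $\int_B|DE|^p\le c\,h^{2-p}$.'' This cannot work for the singular part of the error. The lemma must cover exponents $2\le p<\beta+2$ --- it is applied later with $\beta=p=2$ to control $\int_S r^2|DE|^2$ in (\ref{41}) --- and for $p\ge 2$ the unweighted integral $\int_B|D\tilde l|^p\sim\int_B r^{-p}$ diverges. The weight has to stay attached to the singular gradient, giving $\int_B r^{\beta}|D\tilde l|^p\le c\int_B r^{\beta-p}\le c\rho^{\beta+2-p}$, which is precisely where the hypothesis $p<\beta+2$ enters; this is what the paper does. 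Your closing sentence acknowledges this, but it contradicts the stated reduction, and your intermediate claim that ``$\int_B r^{-p}$ is finite exactly when $p<2<\beta+2$'' silently restricts the argument to $p<2$, which is not enough for the lemma or for its later use.

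The second, more serious gap is in the discrete part, where you need $\|\tilde l-\tilde l_I\|_{L^2(B)}\le ch$ for the interpolant $\tilde l_I$ of $\tilde l$. This bound is not available: $B$ is centered at the singularity $z_0$, and on the triangles meeting $z_0$ the function $\tilde l=\zeta\log|z-z_0|$ is not in $H^2$ (not even bounded), so the interpolation estimate (\ref{16}) does not apply; moreover $\tilde l_I$ itself is ill-controlled, since its nodal values behave like the logarithm of the distance from the node to $\tilde z_0$. This is exactly why the paper only ever interpolates $\tilde l$ on $\Omega_1$, the union of triangles at distance $\ge h$ from $\tilde z_0$ (Lemma \ref{43}), never on the triangles near the singularity. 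In place of your interpolant the paper uses the bounded-mean-oscillation Lemma \ref{37}: there is a \emph{constant} $l_0$ with $\|l-l_0\|_{L^2(B^T)}\le ch$, estimate (\ref{38}). Since $Dl_0=0$, one writes $D\tilde l_h=D(\tilde l_h-l_0)$, applies the inverse estimate $\sup_{B^h}|D(\tilde l_h-l_0)|\le ch^{-2}\|\tilde l_h-l_0\|_{L^2(B^h)}$, and bounds $\|\tilde l_h-l_0\|_{L^2(B^h)}\le\|\tilde L_h-\tilde l\|_{L^2(B^S)}+\|l-l_0\|_{L^2(B^T)}\le ch$ using the global $L^2$ estimate (\ref{39}) together with (\ref{38}). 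Replacing your interpolant by this constant-subtraction device (and keeping the weight with $D\tilde l$) turns your outline into the paper's proof; without it, the step ``$\|\tilde l_I-\tilde L_h\|_{L^2}\le ch$'' is unjustified.
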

\begin{proof}
 We have $|D\tilde l| \le \frac{c}{r}$ where $r=|\varphi^{-1}_{z_0}(z)-z_0|$. In view of 
 Remark \ref{36} we may  w.l.o.g. consider $r$ as a function as well on $B^S$ and $B=B^T$ and get
 \begin{equation}
  \begin{aligned}   
 \int_{S\cap B^S}|\varphi_{z_0}(z)-z_0|^{\beta}|DE|^p 
 \le& c \int_{S\cap B^S}r^{\beta}(|D \tilde l|^p+|D\tilde L_h|^p) \\
 \le& c \rho^{\beta+2-p} + \int_{B^h}r^{\beta}|D\tilde l_h|^p.
  \end{aligned}
 \end{equation}
By Lemma \ref{37} there is $l_0 \in \mathbb{R}$ so that
\begin{equation} \label{38}
 \|l-l_0\|_{L^2(B^T)} \le c h.
\end{equation}
We get (using an inverse estimate to bound a $W^{1, \infty}$- by a $L^2$-norm)
\begin{equation}
\begin{aligned}
 \int_{B^h}r^{\beta}|D\tilde l_h|^p=& \int_{B^h}r^{\beta}|D(\tilde l_h-l_0)|^p \\
 \le& c\rho^{\beta} h^2\sup_{B^h}|D(\tilde l_h-l_0)|^p \\
 \le& c \rho^{\beta}h^{2}h^{-2p}\|\tilde l_h-l_0\|^p_{L^2(B^h)}\\
  \le& c \rho^{\beta}h^{2}h^{-2p}(\|\tilde L_h-\tilde l\|_{L^2(B^S)}+\| l-l_0\|_{L^2(B^T)})^p \\
  \le& c \rho^{\beta}h^{2-p}
\end{aligned}
 \end{equation}
 in view of (\ref{39}) and (\ref{38}).
\end{proof}
\begin{remark} 
 If we choose $\beta=0$ in Lemma \ref{40} we obtain that $\|DE
 \|_{L^1(S\cap B^S)}=O(h)$.
\end{remark}

We estimate the error $E$ 
outside $B^S$, $B$ as in Lemma \ref{40}, which means de facto in 
$B_2 \setminus B^S$ since $\supp \tilde l\subset B_{\frac{3}{2}r_0}(z_0)$ and get
\begin{equation}
 \begin{aligned}
  \int_{B_2\setminus B^S}|DE| &\le
  \left(\int_{B_2 \setminus B^S}r^{-2}\right)^{\frac{1}{2}}
   \left(\int_{S}r^{2}|DE|^2\right)^{\frac{1}{2}} \\
   &\le   
 c|\log h|^{\frac{1}{2}}
   \left(\int_{S}r^{2}|DE|^2\right)^{\frac{1}{2}}
 \end{aligned}
\end{equation}
and
\begin{equation}
 \begin{aligned}
  \int_{S}r^{2}|DE|^2
   =& \int_{S}\left<DE,D(r^2E)\right>
   -2Er\left<DE,Dr\right> \\
   \le& \int_{S}\left<DE,D(r^2E)\right>
   +2\left(\int_S E^2\right)^{\frac{1}{2}}
   \left(\int_Sr^2|DE|^2\right)^{\frac{1}{2}}
 \end{aligned}
\end{equation}
which leads by Peter-Paul inequality to 
\begin{equation} \label{41}
\begin{aligned}
   \int_{S}r^{2}|DE|^2 \le 2\int_{S}\left<DE,D(r^2E)\right> +4 \int_SE^2 
  \le 2\int_{S}\left<DE,D(r^2E)\right> + ch^2
   \end{aligned}
\end{equation}
in view of Remark \ref{44} (ii). The next goal is to show
\begin{equation}\label{48}
\int_{S}\left<DE,D(r^2E)\right>\le \frac{1}{4}\int_Sr^2|DE|^2+ch^2|\log h|
\end{equation}
which implies (\ref{42}).
Define 
\begin{equation}
 T^1 =\{ \tau \in T_h: \dist(\tilde z_0, \tau )\ge h \}, \quad \Omega_1 = \bigcup_{\tau\in T^1}\tau
 \subset S_h.
\end{equation}
and note, that for small $h$
\begin{equation}
 \{z\in S: \text{dist}_S(z, z_0)\ge 3h\}\subset \Omega_1^S
\end{equation}
and $\tilde l\in C^{\infty}(\Omega_1^S)$. Let $\tilde l_I$ be a function in $V_h$
which equals $\tilde l$ at all nodes in $\Omega_1$.
Let $\bar l_I$ denote the lift of $\tilde l_I$ to $S$.
\begin{lemma} \label{43}
 There hold
 \begin{equation} 
  \begin{aligned}
   \int_{\Omega_1^S}(\bar l_I-\tilde l)^2 \le c h^2, \quad 
  \int_{\Omega_1^S}r^{-2}|D(r^2(\bar l_I-\tilde l))|^2 \le c h^2 |\log h|.
   \end{aligned}
 \end{equation}
\end{lemma}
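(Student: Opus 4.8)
The plan is to reduce both inequalities to the classical piecewise-linear interpolation error on flat triangles, exploiting that on $\Omega_1^S$ the function $\tilde l$ is smooth and bounded away from its singularity at $z_0$, and then to carry out the elementary polar-coordinate integrations of powers of $r$. First I would record the pointwise bounds on $\tilde l$ on $\Omega_1^S$. Since $\{z\in S:\text{dist}_S(z,z_0)\ge 3h\}\subset\Omega_1^S$ and the family is quasi-uniform, on each triangle $\tau\subset\Omega_1$ the distance $r$ is comparable to a single value $d_\tau\ge ch$; and because $\tilde l$ agrees with $\log r$ up to the smooth cut-off $\zeta$, we have $|D\tilde l|\le c/r$ and $|D^2\tilde l|\le c/r^2$ on $\Omega_1^S$, the $\zeta$-derivatives only contributing where $r\approx r_1$ is bounded below and hence being absorbed. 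By Remark \ref{36} the lifts among $S$, $S_h$ and $T_{z_0}S$ alter these weighted norms only by bounded constant factors and keep $r$ comparable, so it suffices to establish the two inequalities for the nodal interpolant $\tilde l_I$ on the flat triangulation and then lift back. On a single triangle $\tau\subset\Omega_1$ the elementwise form of the interpolation estimate (\ref{16}) gives, with $E=\bar l_I-\tilde l$,
\[
\|E\|_{L^2(\tau)}\le ch^2|\tilde l|_{H^2(\tau)},\qquad |E|_{H^1(\tau)}\le ch|\tilde l|_{H^2(\tau)}.
\]

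For the first estimate I would sum the $L^2$-bound over $\tau\subset\Omega_1$ and insert $|D^2\tilde l|\le c/r^2$ to obtain
\[
\int_{\Omega_1^S}E^2\le ch^4\sum_{\tau}\int_\tau r^{-4}=ch^4\int_{\Omega_1^S}r^{-4}.
\]
Passing to polar coordinates centred at $z_0$ and integrating from $r\approx h$ to $r\approx r_1$ yields $\int_{\Omega_1^S}r^{-4}\le c\int_h^{r_1}r^{-3}\,dr\le ch^{-2}$, whence $\int_{\Omega_1^S}E^2\le ch^2$, which is the first claim (and is in fact sharper than the bound $ch^2|\log h|$ needed below).

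For the second estimate I would expand $D(r^2E)=2r(Dr)E+r^2DE$; since $|Dr|\le c$ this gives the pointwise bound $r^{-2}|D(r^2E)|^2\le c(E^2+r^2|DE|^2)$. The term $\int_{\Omega_1^S}E^2$ is controlled by $ch^2$ from the first estimate, while for the gradient term I combine the $H^1$-interpolation bound with $r^2\le cd_\tau^2$ and $|D^2\tilde l|\le c/r^2$ on each triangle:
\[
\int_{\Omega_1^S}r^2|DE|^2\le ch^2\sum_\tau d_\tau^2\int_\tau r^{-4}\le ch^2\int_{\Omega_1^S}r^{-2}.
\]
Here the polar integration produces the logarithm, $\int_{\Omega_1^S}r^{-2}\le c\int_h^{r_1}r^{-1}\,dr\le c|\log h|$, so $\int_{\Omega_1^S}r^2|DE|^2\le ch^2|\log h|$, and the second claim follows.

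The main obstacle I anticipate is the careful bookkeeping that on each triangle of $\Omega_1$ the weight $r$ is comparable to the single value $d_\tau\ge ch$, which is precisely where quasi-uniformity and the separation $\{z\in S:\text{dist}_S(z,z_0)\ge 3h\}\subset\Omega_1^S$ enter, together with the verification via Remark \ref{36} that lifting the weighted norms between $S$, $S_h$ and the tangent plane produces only bounded constants; once these are in place, the remaining steps are the routine polar-coordinate integrations above.
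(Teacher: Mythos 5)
Your proposal is correct and follows essentially the same route as the paper: elementwise interpolation estimates combined with $|D^2\tilde l|\le c/r^2$, the comparability of $r$ on each triangle of $\Omega_1$ (a consequence of $\min_\tau r\ge h$), the pointwise splitting $r^{-2}|D(r^2E)|^2\le c\,(E^2+r^2|DE|^2)$, and the polar-coordinate integrals $\int_{\Omega_1^S}r^{-4}\le ch^{-2}$ and $\int_{\Omega_1^S}r^{-2}\le c|\log h|$. The only cosmetic difference is that you use $L^2$/$H^1$-based interpolation bounds per triangle where the paper uses $W^{s,\infty}(\tau^S)$ bounds and then integrates; given the comparability of $r$ on each triangle these are interchangeable.
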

\begin{proof}
 Let $\tau \in T^1$ then 
 \begin{equation}
  \begin{aligned}
  \|\tilde l - \bar l_I\|_{W^{s, \infty}(\tau^S)}
  \le& c h^{2-s}\|\tilde l\|_{W^{2, \infty}(\tau^S)}\\
  \le& c h^{2-s}(\min_{\tau}r)^{-2}\quad s=0, 1.
  \end{aligned}
  \end{equation}
Since $\min_{\tau}r\ge h$, $\max_{\tau}r-\min_{\tau}r\le h$ we have
\begin{equation} \label{46}
 \frac{\max_{\tau}r}{\min_{\tau}r}\le 2
\end{equation}
and hence for $\beta\ge 0$
\begin{equation}
\begin{aligned}
 \int_{\tau^S}r^{\beta}(\tilde l-\bar l_I)^2+h^2\int_{\tau^S}r^{\beta}|D(\tilde l-\bar l_h)|^2
 \le c\int_{\tau^S}r^{\beta}h^4(\min_{\tau}r)^{-4} 
\le c \int_{\tau^S}r^{\beta-4}h^4.
\end{aligned}
 \end{equation}
 Summing over all $\tau \in T^1$ implies the Lemma since
 \begin{equation}
  \int_{\tau^S}r^{\beta-4}h^4 \le \begin{cases}
                                   c h^{\beta+2}, \quad &\text{ if }\beta <2,\\
                                   c |\log h|h^4, \quad &\text{ if }\beta =2
                                  \end{cases}
 \end{equation}
and
\begin{equation}
r^{-2}|D(r^2(\tilde l-\bar l_I))|^2 \le 8 (\tilde l- \bar l_I)^2+2r^2|D(\tilde l-\bar l_I)|^2.
\end{equation}
\end{proof}

We conclude
\begin{equation} \label{60}
\begin{aligned}
\|\tilde l_I-\tilde l_h\|_{L^{\infty}(\Omega_1)} \le& c h^{-1}\|\tilde l_I-\tilde l_h\|_{L^{2}(\Omega_1)}\\
\le& c h^{-1}(\|\bar l_I-\tilde l\|_{L^{2}(\Omega^S_1)}+\|\tilde L_h-\tilde l\|_{L^{2}(\Omega^S_1)}) \\
\le& c
\end{aligned}
\end{equation}
in view of Lemma \ref{43} and Remark \ref{44} (ii).
\begin{lemma} \label{51}
Let $\varphi \in V_h$ and $v=(r^2\varphi)_I\in V_h$ the linear interpolation of $r^2\varphi$ 
in $\Omega_1$ then
\begin{equation}
\int_{\Omega_1}r^{-2}|D(r^2\varphi-v)|^2 \le c \int_{\Omega_1}\varphi^2
\end{equation}
\end{lemma}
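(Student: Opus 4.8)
We want to bound the local interpolation error of $r^2\varphi$ on each triangle $\tau \in T^1$, weighted by $r^{-2}$, and sum. The plan is to work triangle by triangle and exploit that on $\Omega_1$ we have $r \ge h$ together with the bounded-variation estimate \eqref{46}, namely $\max_{\tau} r / \min_{\tau} r \le 2$, which lets us treat $r$ as essentially constant (comparable to $\min_\tau r$) across any single $\tau$. First I would fix $\tau \in T^1$ and estimate $\|D(r^2\varphi - v)\|_{L^2(\tau)}$ using the standard interpolation estimate \eqref{16}: since $v$ is the linear interpolant of $r^2\varphi$, the first-order interpolation error on $\tau$ is controlled by $h \|D^2(r^2\varphi)\|_{L^2(\tau)}$ (up to the usual constant). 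The key observation is that $\varphi$ is \emph{linear} on $\tau$ (it lies in $V_h$), so $D^2\varphi = 0$ on the interior of $\tau$, and hence $D^2(r^2\varphi)$ only involves derivatives that fall on the smooth weight $r^2$.

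Concretely, I would expand $D^2(r^2\varphi) = (D^2 r^2)\varphi + 2(Dr^2)\otimes D\varphi + r^2 D^2\varphi$ and drop the last term since $D^2\varphi = 0$ on $\tau$. Now $D(r^2) = 2r\,Dr$ and $D^2(r^2) = 2\,Dr\otimes Dr + 2r\,D^2 r$; since $r$ is a Euclidean distance function in $T_{z_0}S$ and (via the lift of Remark \ref{36}) comparable to one on $\tau^S$, one has $|Dr| \le c$ and $|D^2 r| \le c/r$, so that $|D^2(r^2)| \le c$ and $|D(r^2)| \le c\,r$. Therefore
\begin{equation}
|D^2(r^2\varphi)| \le c\,|\varphi| + c\,r\,|D\varphi| \quad \text{on } \tau.
\end{equation}
Feeding this into the interpolation estimate and then multiplying by the weight $r^{-2}$, and using $r \sim \min_\tau r$ on $\tau$ by \eqref{46}, gives on each triangle a bound of the schematic form $r^{-2} h^2 \int_\tau (|\varphi|^2 + r^2 |D\varphi|^2) \le c \int_\tau (h^2 r^{-2}|\varphi|^2 + h^2 |D\varphi|^2)$. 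On $\Omega_1$ we have $r \ge h$, so $h^2 r^{-2} \le 1$, and I would absorb the gradient term $h^2\int_\tau |D\varphi|^2$ back into $\int_\tau \varphi^2$ by the inverse estimate $\|D\varphi\|_{L^2(\tau)} \le c h^{-1}\|\varphi\|_{L^2(\tau)}$, valid because $\varphi \in V_h$ is a finite element function on the quasi-uniform mesh. This yields $\int_\tau r^{-2}|D(r^2\varphi - v)|^2 \le c\int_\tau \varphi^2$ on each $\tau$.

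Summing over $\tau \in T^1$ then gives the claim. The main obstacle I expect is bookkeeping the lift between $\Omega_1 \subset S_h$, its preimage $\Omega_1^S \subset S$, and the flat coordinates in $T_{z_0}S$ in which $r$ is genuinely the Euclidean distance: the weight $r$ and its derivative bounds $|Dr|\le c$, $|D^2 r|\le c/r$ are clean only in the tangent-plane picture, and one must invoke Remark \ref{36} to transfer the weighted $W^{1,2}$ estimates back and forth, which (as noted there) costs only a harmless constant factor. A secondary technical point is that the interpolation estimate \eqref{16} is stated for $\chi \in H^2$, so one should check that $r^2\varphi \in H^2(\tau)$ away from $z_0$ — this is fine on $\Omega_1$ precisely because $r \ge h > 0$ there, so $r^2$ is smooth on each $\tau \in T^1$ and all the derivative bounds are finite.
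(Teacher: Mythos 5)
Your proposal is correct and follows essentially the same route as the paper: a triangle-by-triangle estimate using the Leibniz expansion of $D^2(r^2\varphi)$ with $D^2\varphi=0$ on each $\tau$, the quasi-constancy of $r$ from (\ref{46}), the bound $r\ge h$ on $\Omega_1$, an inverse estimate for $\varphi\in V_h$, and summation over $\tau\in T^1$. The only (immaterial) difference is that you run the argument in $L^2$-based norms via the $H^2$-interpolation estimate, whereas the paper works with $W^{1,\infty}$/$W^{2,\infty}$ seminorms on each triangle and converts to the weighted integral at the end.
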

\begin{proof}
For $\tau \in T^1$ we have
\begin{equation} \label{47}
\begin{aligned}
|r^2\varphi-v|_{W^{1, \infty}(\tau)} \le& ch |r^2\varphi|_{W^{2, \infty}(\tau)} \\
\le& c h \sum_{j=1}^2|r^2|_{W^{j, \infty}(\tau)}|\varphi|_{W^{2-j, \infty}(\tau)}
\end{aligned}
\end{equation}
because $D^2(\varphi|\tau)=0$. In view of (\ref{46}) and $r\ge h$ on $\Omega_1$ there holds
\begin{equation}
|r^2|_{W^{j, \infty}(\tau)} \le c \inf_{\tau}r^{2-j} \le c \inf_{\tau}rh^{1-j}
\end{equation}
and in view of an inverse estimate
\begin{equation}
\begin{aligned}
|\varphi|_{W^{2-j, \infty}(\tau)} \le& c h^{j-3}\|\varphi\|_{L^2(\tau)}.
\end{aligned}
\end{equation}
Applying these estimates in (\ref{47}) gives
\begin{equation} \label{47_}
\begin{aligned}
|r^2\varphi-v|_{W^{1, \infty}(\tau)} \le& 
c h^{-1}\inf_{\tau}r\|\varphi\|_{L^2(\tau)}
\end{aligned}
\end{equation}
which leads to
\begin{equation}
\int_{\tau}r^{-2}|D(r^2\varphi-v)|^2 \le c \int_{\tau}\varphi^2 
\end{equation}
by estimating the integrand in the $L^{\infty}$-norm. Summing over $\tau \in T^1$ 
gives the claim.
\end{proof}

\begin{lemma}
Estimate (\ref{48}) holds.
\end{lemma}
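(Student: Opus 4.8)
The plan is to exploit the approximate Galerkin orthogonality of $E=\tilde l-\tilde L_h$ in order to replace the weighted function $r^2E$ by a finite element function, thereby reducing (\ref{48}) to a weighted interpolation estimate; the factor $\tfrac14$ is then produced by a Peter--Paul step. By the definition of $\tilde l_h$ together with the transformation formulas (\ref{101}) and (\ref{100}) one has, for every $v_h\in V_h$ with lift $V_h$ to $S$,
\begin{equation}
a(E,V_h)=a(\tilde l,V_h)-a(\tilde L_h,V_h)=O(h^2),
\end{equation}
the error being $O(h^2)$ times controllable norms of $\tilde L_h$ and $V_h$. Hence, for any $\chi\in V_h$ (identified with its lift),
\begin{equation}
\int_S\langle DE,D(r^2E)\rangle=\int_S\langle DE,D(r^2E-\chi)\rangle+\int_S\langle DE,D\chi\rangle,
\end{equation}
and since $a(E,\chi)=O(h^2)$ the last integral equals $-\int_SE\chi+O(h^2)$. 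As $\chi$ will be close to $r^2E$ in $L^2$ and $\|E\|_{L^2(S)}\le ch$ by Remark \ref{44}(ii), the term $\int_SE\chi$ is of order $h^2$. Applying weighted Cauchy--Schwarz and Young's inequality to the first integral,
\begin{equation}
\left|\int_S\langle DE,D(r^2E-\chi)\rangle\right|\le \tfrac14\int_Sr^2|DE|^2+\int_Sr^{-2}|D(r^2E-\chi)|^2,
\end{equation}
so that (\ref{48}) follows once I exhibit a $\chi\in V_h$ with
\begin{equation}\label{planstar}
\int_Sr^{-2}|D(r^2E-\chi)|^2\le ch^2|\log h|.
\end{equation}

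On the far region $\Omega_1^S$ I would build and estimate $\chi$ by splitting, with $\varphi=\tilde l_I-\tilde l_h\in V_h$ and $\bar l_I$ the lift of $\tilde l_I$,
\begin{equation}
r^2E=r^2(\tilde l-\bar l_I)+r^2\varphi,
\end{equation}
and choosing $\chi=(r^2\varphi)_I$, the $V_h$-interpolant of $r^2\varphi$ on $\Omega_1$. The first, smooth summand is controlled directly by the second estimate of Lemma \ref{43}, which is precisely where the factor $|\log h|$ enters; the discrete summand is handled by Lemma \ref{51}, giving $\int_{\Omega_1}r^{-2}|D(r^2\varphi-\chi)|^2\le c\int_{\Omega_1}\varphi^2$. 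The smallness $\int_{\Omega_1}\varphi^2\le ch^2$ then follows from the triangle inequality combined with Lemma \ref{43} and Remark \ref{44}(ii) (and is consistent with the uniform bound (\ref{60})). Together these establish (\ref{planstar}) on $\Omega_1^S$.

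It remains to verify (\ref{planstar}) near the singularity. There I would set $\chi=0$ on the ball $B^S$ with $B=\{|z-z_0|\le\rho\}$ and $\rho=O(h)$ chosen so that $S\setminus\Omega_1^S\subset B^S$, and estimate directly: expanding $D(r^2E)=2rE\,Dr+r^2DE$ and invoking Lemma \ref{40} (with $\beta=2$, $p=2$, so that $\int_{B^S}r^2|DE|^2\le c\rho^2\le ch^2$) together with $\int_{B^S}E^2\le ch^2$, one obtains $\int_{B^S}r^{-2}|D(r^2E)|^2\le ch^2$. I expect the main obstacle to be the interface between $B^S$ and $\Omega_1^S$: the single test function $\chi$ must be defined consistently across the triangles straddling $\partial\Omega_1$ so that both the weighted interpolation estimate (\ref{planstar}) and the orthogonality error stay of the claimed order there. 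The only genuinely new point compared with the Euclidean argument of \cite{Scott1976} is that orthogonality holds merely up to the $O(h^2)$ metric discrepancies recorded in (\ref{101}) and (\ref{100}), which is harmless since the target right-hand side already carries the factor $|\log h|$.
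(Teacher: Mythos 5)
Your plan is, in outline, the paper's own proof: the approximate Galerkin orthogonality $a(E,V_h)=O(h^2)\|\tilde L_h\|_{H^1(S)}\|V_h\|_{H^1(S)}$, the reduction of (\ref{48}) via a weighted Young inequality to a bound on $\int r^{-2}|D(r^2E-\chi)|^2$, the splitting $r^2E=r^2(\tilde l-\bar l_I)+r^2\varphi$ with $\varphi=\tilde l_I-\tilde l_h$ and the choice $\chi=(r^2\varphi)_I$ on $\Omega_1$, Lemma \ref{43} as the source of the factor $|\log h|$, Lemma \ref{51} together with $\int_{\Omega_1}\varphi^2\le ch^2$, the treatment of $-\int_S E\chi$ via $\|E\|_{L^2(S)}\le ch$, and Lemma \ref{40} with $\beta=2$, $p=2$ plus Remark \ref{44}(ii) near the singularity. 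All of these steps are correct and coincide with the paper's (up to the immaterial choice of constants $\tfrac14$ versus $\tfrac1{16}$ and $4$).

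The interface issue you flag at the end is, however, a genuine gap rather than a technicality, and the paper's resolution differs structurally from your sketch in exactly one point: the weighted bound $\int r^{-2}|D(r^2E-\chi)|^2$ is used \emph{only} over $\Omega_1^S$, where $r\ge h$, and is never extended across $S\setminus\Omega_1^S$. Indeed it cannot be: $\chi$ vanishes only at the nodes \emph{interior} to $S_h\setminus\Omega_1$, so on the straddling triangles it is a nontrivial linear function, while $r^{-2}$ is not integrable near $z_0$; so your unified estimate can degenerate there (your prescription ``$\chi=0$ on $B^S$'' contradicts continuity of $\chi\in V_h$). The paper instead estimates the near-region cross term directly, $\int_{S\setminus\Omega_1^S}\langle DE,DV_h\rangle\le|V_h|_{W^{1,\infty}(S\setminus\Omega_1^S)}\int_{S\setminus\Omega_1^S}|DE|$, bounding $\int_{S\setminus\Omega_1^S}|DE|\le ch$ by Lemma \ref{40} with $\beta=0$, $p=1$, and bounding $|V_h|_{W^{1,\infty}(S\setminus\Omega_1^S)}\le ch$ by scaling from the nodal values of $v_h$ on $\partial\Omega_1$: these are $O(h^2)$ because $r\le ch$ there and $\|\tilde l_I-\tilde l_h\|_{L^{\infty}(\Omega_1)}\le c$, i.e. estimate (\ref{60}), which the paper derived beforehand precisely for this purpose (the same bound gives $\|v_h\|_{L^2(S_h\setminus\Omega_1)}\le ch^3$, keeping the $\int_S EV_h$ and $O(h^2)\|V_h\|_{H^1(S)}$ terms harmless). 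Once this interface estimate is inserted in place of your unresolved step, your argument becomes the paper's proof.
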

\begin{proof}
For $v_h\in V_h$ we have
\begin{equation}
a(E, V_h) = O(h^2)\|\tilde L_h\|_{H^1(S)}\|V_h\|_{H^1(S)}
\end{equation}
and estimate
\begin{equation} \label{52}
\begin{aligned}
\int_{S}\left<DE, D(r^2E)\right> =& \int_S\left<DE, D(r^2E-V_h)\right>-\int_SEV_h \\
&+ O(h^2)\|\tilde L_h\|_{H^1(S)}\|V_h\|_{H^1(S)}\\
\stackrel{Lemma\ \ref{40}, Remark\ \ref{44}}{\le}& \int_{\Omega_1}\left<DE, D(r^2E-V_h)\right> 
+ c(h^2+h|V_h|_{W^{1,\infty}(S\setminus \Omega_1)}) \\
&+\int_S V_h^2+O(h^2)\|\tilde L_h\|_{H^1(S)}\|V_h\|_{H^1(S)}.
\end{aligned}
\end{equation}
If $v_h$ interpolates $r^2(\bar l_I-\tilde L_h)$ in $\Omega_1$ 
then
\begin{equation} \label{53}
 \begin{aligned}
  \int_{\Omega_1^S}\left<DE, D(r^2E-V_h)\right>
  \le& \frac{1}{16}\int_Sr^2|DE|^2+4\int_{\Omega_1^S}r^{-2}|D(r^2E-V_h)|^2 \\
  \le& \frac{1}{16}\int_S r^2|DE|^2+8\int_{\Omega_1^S}r^{-2}|D(r^2(\tilde l-\bar l_I))|^2\\
 & + 8\int_{\Omega_1^S}r^{-2}|D(r^2(\bar l_I-\tilde L_h)-V_h)|^2 \\
 \stackrel{Lemma\ \ref{43}(ii), Lemma\ \ref{51}}{\le}&\frac{1}{16}
 \int_S r^2|DE|^2+ch^2|\log h|+c\int_{\Omega_1^S}(\bar l_I-\tilde L_h)^2 \\
 \le& \frac{1}{16}\int_S r^2|DE|^2+ch^2|\log h|\\
 &+
 c \left(\int_{\Omega_1^S}(\bar l_I-\tilde l)^2
 +\int_{\Omega_1^S}(\tilde l-\tilde L_h)^2\right)\\
 \stackrel{Lemma\ \ref{43}, Remark\ \ref{44}}{\le}&
 \frac{1}{16}\int_{S}r^2|DE|^2+ch^2|\log h|.
 \end{aligned}
\end{equation}
We use (\ref{53}) to estimate the first summand on the right-hand side of (\ref{52})
and obtain
\begin{equation}
\begin{aligned}
 \int_{S}\left<DE, D(r^2E)\right>\le& c h^2|\log h| +\frac{1}{16}\int_Sr^2|DE|^2
 +ch|V_h|_{W^{1,\infty}(S\setminus \Omega_1^S)} +\int_S V_h^2 \\
 &+ O(h^2)\|\tilde L_h\|_{H^1(S)}\|V_h\|_{H^1(S)}
 \end{aligned}
\end{equation}
We estimate $v_h$ with standard interpolation estimates
\begin{equation}
 \begin{aligned}
  h|v_h|_{W^{1, \infty}(S_h\setminus \Omega_1)}
  +h^{-1}\|v_h\|_{L^2(S_h\setminus \Omega_1)} \le& 
  c \sup_{S\setminus \Omega_1^S}|r^2(\bar l_I-\tilde L_h)|\\
  =& \sup_{\partial \Omega_1^S}|r^2(\bar l_I-\tilde L_h)|\\
  \le& ch^2
 \end{aligned}
\end{equation}
where we assume w.l.o.g. that
$v_h$ is zero at all nodes in the interior of 
$S_h\setminus\Omega_1$ and for the last inequality estimate (\ref{60}). Furthermore, we have
\begin{equation}
\begin{aligned}
 \|v_h\|_{L^2(\Omega_1)} \le& c \|r^2(\bar l_I-\tilde L_h)\|_{L^2(\Omega_1^S)}\\
 \le& c \|\bar l_I-\tilde l\|_{L^2(\Omega_1^S)}+\|\tilde l-\tilde L_h\|_{L^2(\Omega_1^S)}\\
 \le& ch
\end{aligned}
 \end{equation}
in view of Lemma \ref{43} and Remark \ref{44} and
\begin{equation}
\|V_h\|_{H^1(S)} \le c h^{-1}\|V_h\|_{L^2(S)} \le ch
\end{equation}
and
\begin{equation}
\|\tilde L_h\|_{H^1(S)} \le c h^{-1}\|\tilde L_h\|_{L^2(S)} \le c.
\end{equation}
 \end{proof}

\end{document}